\newtheorem{theorem}{Theorem}[section]
\newtheorem{proposition}[theorem]{Proposition}
\newtheorem{lemma}[theorem]{Lemma}
\newtheorem{follow}[theorem]{Corollary}
\newtheorem{assumption}[theorem]{Assumption}
\theoremstyle{definition}
\newtheorem{remark}[theorem]{Remark}
\newcommand{\bel}{\begin{equation} \label}
\newcommand{\ee}{\end{equation}}
\newcommand{\pd}{\partial}
\newcommand{\Dom}{{\textit{D}}}
\newcommand{\Ran}{{\textrm{Ran}}}
\newcommand{\R}{{\mathbb R}}
\newcommand{\N}{{\mathbb N}}
\newcommand{\B}{{\mathcal B}}
\newcommand{\F}{{\mathcal F}}
\newcommand{\HH}{{\mathcal H}}
\newcommand{\OO}{{\mathcal O}}
\newcommand{\PP}{{\mathcal P}}
\newcommand{\eps}{{\varepsilon}}
\def\beq{\begin{equation}}
\def\eeq{\end{equation}}
\newcommand{\bea}{\begin{eqnarray}}
\newcommand{\eea}{\end{eqnarray}}
\newcommand{\beas}{\begin{eqnarray*}}
\newcommand{\eeas}{\end{eqnarray*}}
\newcommand{\Pre}[1]{\ensuremath{\mathrm{Re} \left( #1 \right)}}
\newcommand{\Pim}[1]{\ensuremath{\mathrm{Im} \left( #1 \right)}}
{

% pour les normes

\begin{document}

\begin{center}
{\Large \bf H\"older stable determination of a quantum scalar potential in unbounded cylindrical domains}

\medskip

%\today
\end{center}

\medskip

\begin{center}
{\sc \footnote{CPT, CNRS UMR 7332, Universit\'e d'Aix-Marseille, 13288 Marseille, France \& Universit\'e du Sud-Toulon-Var, 83957 La Garde, France.}{Yavar Kian}, \footnote{Institute of Mathematics, Faculty of Mathematics and Computer Science, Jagiellonian University, 30-348 Krak\'ow, Poland.}{Quang Sang Phan}, \footnote{CPT, CNRS UMR 7332, Universit\'e d'Aix-Marseille, 13288 Marseille, France \& Universit\'e du Sud-Toulon-Var, 83957 La Garde, France.}{Eric Soccorsi}}
\end{center}

\begin{abstract}
We consider the inverse problem of determining the time independent scalar potential of the dynamic Schr\"odinger equation in an infinite cylindrical domain from one boundary Neumann observation of the solution. We prove H\"older stability by choosing the Dirichlet boundary condition suitably.
\end{abstract}

\medskip

{\bf  AMS 2010 Mathematics Subject Classification:} 35R30.\\

{\bf  Keywords:} Inverse problem, Schr\"odinger equation, scalar potential, Carleman estimate, infinite cylindrical domain.\\

\tableofcontents

%%%%%%%%%%%%%%%%%%%%%%%%%%%%%%%%%%%%%%%%%%%%%%%%%%%%%%%%%%%%%%%%%%%%%%%%%%%%%%%%%%%%%%%%%%%%%%%%%%%%%%%%%%%%%%%%%%%%%%%%%%%%%%%%%%%%%%%%%%%%%%%%%%%%%%%%%%%%%%%%%%%%%%%%%%%%%%%%%%%
%%%%%%%%%%%%%%%%%%%%%%%%%%%%%%%%%%%%%%%%%%%%%%%%%%%%%%%%%%%%%%%%%%%%%%%%%%%%%%%%%%%%%%%%%%%%%%%%%%%%%%%%%%%%%%%%%%%%%%%%%%%%%%%%%%%%%%%%%%%%%%%%%%%%%%%%%%%%%%%%%%%%%%%%%%%%%%%%%%%
%%%%%%%%%%%%%%%%%%%%%%%%%%%%%%%%%%%%%%%%%%%%%%%%%%%%%%%%%%%%%%%%%%%%%%%%%%%%%%%%%%%%%%%%%%%%%%%%%%%%%%%%%%%%%%%%%%%%%%%%%%%%%%%%%%%%%%%%%%%%%%%%%%%%%%%%%%%%%%%%%%%%%%%%%%%%%%%%%%%

\section{Statement of the problem and results}
\label{sec-intro} 
\setcounter{equation}{0}

We continue our analysis of the inverse problem of determining the scalar potential $q : \Omega \rightarrow \R$ in a unbounded quantum cylindrical domain $\Omega=\omega \times \R$, where $\omega$ is a
connected bounded open subset of $\R^{n-1}$, $n\geq 2$, with no less than $C^{2}$-boundary $\partial \omega$, from partial Neumann data. This may be equivalently reformulated as to whether the electrostatic disorder occuring in $\Omega$, modelling an idealized straight carbon nanotube, can be retrieved from the partial boundary observation of the quantum wave propagating in $\Omega$. We refer to \cite{KPS1}[\S 1.2] for the discussion on the physical motivations and the relevance of this model. Namely we seek stability in the identification of $q$ from partial Neumann measurement of the solution $u$ to the following initial boundary value problem 
\bel{eq1}
\left\{ \begin{array}{rcll} -i u'-\Delta u + q u & = & 0, & {\rm in}\ Q:=(0,T) \times \Omega\\ u(0,x) &= & u_0(x), & x\in \Omega\\
u(t,x)& = & g(t,x),& (t,x)\in \Sigma:=(0,T) \times \Gamma.\end{array} \right.
\ee 
Here $T>0$ is fixed, $\Gamma:=\pd \omega \times \R$ and the ' stands for $\frac{\partial}{\partial t }$. Since $\Gamma$ is unbounded we make the boundary condition in the last line of \eqref{eq1} more explicit. Writing $x:=(x',x_n)$ with $x':=(x_1,\ldots,x_{n-1}) \in \omega$ for every $x \in \Omega$ we extend the mapping
\beas   
C_0^\infty ((0,T)\times \R; {\rm H}^2(\omega ))& \longrightarrow & {\rm L}^2((0,T) \times \R; H^{3/2}(\partial \omega ))) \nonumber \\
v & \mapsto & [ (t,x_n) \in (0,T) \times \R \mapsto v(t,\cdot,x_n)_{|\partial \omega}],
\eeas
to a bounded operator from $\rm{L}^2 ((0,T)\times \R; {\rm H}^2(\omega ))$ into $\rm{L}^2((0,T)\times \R; H^{3/2}(\partial \omega ))$, denoted by $\gamma_0$. Then
for every $u \in C^0([0,T];{\rm H}^2(\Omega))$ the above mentioned boundary condition reads $\gamma_0 u =g$. 

Throughout the entire paper we choose
\bel{co0}
g:=\gamma_0 G_0,\ \textrm{with}\ G_0(t,x):=u_0(x)+it(\Delta-q_0)u_0(x),\ (t,x) \in Q,
\ee
where $q_0=q_0(x)$ is a given scalar function we shall make precise below.

In the particular case where $q$ is {\it a priori} known outside some given compact subset of $\Omega$
and on the boundary $\Gamma$, it is shown in \cite{KPS1} that the scalar potential may be Lipschitz stably retrieved from one partial Neumann observation of the solution to \eqref{eq1} for suitable initial and boundary conditions 
$u_0$ and $g$. This result is similar to \cite{BP}[Theorem 1], which was derived by Baudouin and Puel for the same operator but acting in a bounded domain.
The main technical assumption common to \cite{BP, KPS1} is that 
\bel{ras}
u \in C^1([0,T];L^\infty(\Omega)).
\ee

In this paper we pursue two main goals. First we want to analyse the direct problem associated to \eqref{eq1}-\eqref{co0} in order to exhibit sufficient conditions on $q$ and $u_0$ ensuring \eqref{ras}. Second, we aim to weaken the compactness condition imposed in \cite{KPS1} on the support of the unknown part of $q$, in the inverse problem of determining the scalar potential appearing in \eqref{eq1} from one partial Neumann observation of $u$.

The following result solves the direct problem associated to \eqref{eq1}-\eqref{co0}. Here and in the remaining part of this text we note $\| w \|_{j,\OO}$, $j \in \N$, for the usual $H^{j}$-norm of $w$ in any subset $\OO$ of $\R^m$, $m \in \N^*$, where $H^0(\OO)$ stands for $L^2(\OO)$.

\begin{theorem}
\label{thm-reg}
Let $k \geq 2$, assume that $\partial \omega$ is $C^{2(k+1)}$, and pick
$$ (q_0,u_0) \in \left(  W^{2k,\infty}(\Omega) \cap C^{2(k-1)}(\overline{\Omega};\R) \right) \times H^{2(k+1)}(\Omega), $$
such that
\bel{co1}
{(-\Delta+q_0)^{2+j}u_0} = 0\ \mbox{on}\ \partial \Omega\ \mbox{for\ all}\ j \in \N_{k-2}:=\{0,1,\ldots,k-2\}.
\ee
Then for each $q \in W^{2k,\infty}(\Omega) \cap C^{2(k-1)}(\overline{\Omega})$ obeying the condition
\bel{co2}
\partial_x^m q = \partial_x^m q_0\ \mbox{on}\ \partial \Omega\ \mbox{for\ all}\ m:=(m_j)_{j=1}^n \in \N^n\ \mbox{with}\ | m |:=\sum_{j=1}^n m_j \leq 2(k-2),
\ee
there is a unique solution $u \in \cap_{j=0}^k C^j([0,T];H^{2(k-j)}(\Omega))$ to the boundary value problem \eqref{eq1}-\eqref{co0}. Moreover, we have the estimate
\bel{eq2}
\sum_{j=0}^k \| u^{(j)} \|_{C^0([0,T];H^{2(k-j)}(\Omega))} \leq C \| u_0 \|_{2(k+1),\Omega},
\ee
where $C>0$ is a constant depending only on $T$, $\omega$, $k$, and $\max(\| q_0 \|_{W^{2k,\infty}(\Omega)}, \| q \|_{W^{2k,\infty}(\Omega)})$.
\end{theorem}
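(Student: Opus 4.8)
The plan is to reduce \eqref{eq1}--\eqref{co0} to an abstract Cauchy problem with \emph{homogeneous} initial and boundary data. Setting $v:=u-G_0$, a direct computation from \eqref{co0} shows that $v$ must solve
\[
-iv'-\Delta v+qv=F_0\ \text{in}\ Q,\qquad v(0,\cdot)=0,\qquad \gamma_0v=0,
\]
with source $F_0(t,\cdot)=-(q-q_0)u_0+it\,(\Delta-q)(\Delta-q_0)u_0$, which is \emph{affine} in $t$; equivalently $v'=Av+iF_0$, where $A:=i(\Delta-q)$ acts on $L^2(\Omega)$ with domain $\Dom(A)=H^2(\Omega)\cap H^1_0(\Omega)$. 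Since $q\in L^\infty(\Omega)$ and $\partial\omega$ is well beyond $C^2$, $A$ generates a strongly continuous group $(e^{tA})_{t\in\R}$ on $L^2(\Omega)$: it is a bounded perturbation of $i$ times the self-adjoint Dirichlet Laplacian, whose domain and the attendant elliptic estimates are available uniformly on the cylinder $\Omega=\omega\times\R$ thanks to its bounded geometry (alternatively, via partial Fourier transform in $x_n$). Iterating these elliptic estimates gives, for $1\le m\le k$, $\Dom(A^m)=\{w\in H^{2m}(\Omega):(\Delta-q)^jw=0\ \text{on}\ \partial\Omega,\ 0\le j\le m-1\}$, with the graph norm of $A^m$ equivalent there to $\|\cdot\|_{2m,\Omega}$; moreover each $\Dom(A^m)$ is invariant under the group, on which $e^{tA}$ is uniformly bounded for $|t|\le T$.

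Because $F_0$ is affine in $t$, no abstract higher-order regularity theorem is required. Write $G:=iF_0=G_1+tG_2$ with $G_1:=iF_0(0)$, $G_2:=iF_0'(0)$, and $v_j:=v^{(j)}$. Differentiating the equation, $v_j$ solves $v_j'=Av_j+G^{(j)}$ with $G^{(0)}=G$, $G^{(1)}=G_2$, $G^{(j)}=0$ for $j\ge2$, together with the recursively determined data $v_0(0)=0$, $v_1(0)=G_1$, and $v_j(0)=A^{j-1}G_1+A^{j-2}G_2$ for $j\ge2$. Duhamel's formula then gives $v_j(t)=e^{tA}(A^{j-1}G_1+A^{j-2}G_2)$ for $j\ge2$, $v_1(t)=e^{tA}G_1+(\int_0^te^{\sigma A}\,d\sigma)G_2$, and $v=v_0$ is recovered from $Av_0=v_1-G$. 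Hence, \emph{once $G_1,G_2\in\Dom(A^{k-1})$ has been established}, the group-invariance of the spaces $\Dom(A^{k-j})$ yields $v\in\cap_{j=0}^kC^j([0,T];\Dom(A^{k-j}))$ together with $\sum_{j=0}^k\|v^{(j)}\|_{C^0([0,T];\Dom(A^{k-j}))}\le C\bigl(\|G_1\|_{\Dom(A^{k-1})}+\|G_2\|_{\Dom(A^{k-1})}\bigr)$.

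It thus remains to prove that the two coefficients of $F_0$ — namely $(q-q_0)u_0$ and $(\Delta-q)(\Delta-q_0)u_0$ — lie in $\Dom(A^{k-1})$ with norm $\le C\|u_0\|_{2(k+1),\Omega}$, where $C$ depends only on $\omega$, $k$, and $\max(\|q\|_{W^{2k,\infty}(\Omega)},\|q_0\|_{W^{2k,\infty}(\Omega)})$; this is exactly what conditions \eqref{co1}--\eqref{co2} are designed to ensure. Put $r:=q-q_0$, so \eqref{co2} reads $\partial_x^\alpha r=0$ on $\partial\Omega$ for $|\alpha|\le2(k-2)$, and \eqref{co1} reads $(\Delta-q_0)^pu_0=0$ on $\partial\Omega$ for $2\le p\le k$. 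First, $ru_0\in H^{2(k-1)}(\Omega)$ since $r\in W^{2k,\infty}(\Omega)$ and $u_0\in H^{2(k+1)}(\Omega)$; and for $0\le l\le k-2$, $(\Delta-q)^l(ru_0)$ is a differential operator of order $2l\le2(k-2)$ with bounded coefficients applied to $ru_0$, hence by Leibniz a finite sum of terms $\partial_x^\beta r\,\partial_x^\gamma u_0$ with $|\beta|\le2(k-2)$, each of which vanishes on $\partial\Omega$; thus $ru_0\in\Dom(A^{k-1})$ with the stated bound. Second, an induction on $m$ yields, for all $m\ge1$,
\[
(\Delta-q)^m(\Delta-q_0)u_0=(\Delta-q_0)^{m+1}u_0-\sum_{i=1}^m(\Delta-q)^{i-1}\bigl[r\,(\Delta-q_0)^{m-i+1}u_0\bigr];
\]
for $1\le m\le k-1$ the first term vanishes on $\partial\Omega$ by \eqref{co1}, while each summand, being a differential operator of order $2(i-1)\le2(k-2)$ applied to $r\,(\Delta-q_0)^{m-i+1}u_0$, again produces only terms carrying a factor $\partial_x^\beta r$ with $|\beta|\le2(k-2)$ and hence vanishes on $\partial\Omega$; together with $(\Delta-q)(\Delta-q_0)u_0\in H^{2(k-1)}(\Omega)$ (immediate from $u_0\in H^{2(k+1)}(\Omega)$) this gives $(\Delta-q)(\Delta-q_0)u_0\in\Dom(A^{k-1})$ with the stated bound.

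Finally, $u:=v+G_0$ is the sought solution: $G_0$ is affine in $t$ with coefficients $u_0\in H^{2(k+1)}(\Omega)$ and $(\Delta-q_0)u_0\in H^{2k}(\Omega)$, so $G_0\in\cap_{j=0}^kC^j([0,T];H^{2(k-j)}(\Omega))$ with the corresponding norm bounded by $C\|u_0\|_{2(k+1),\Omega}$, and adding this to the estimate for $v$ yields \eqref{eq2} with the asserted dependence of $C$; uniqueness follows since the difference $w$ of two solutions solves $w'=Aw$, $w(0)=0$, with $w\in C^1([0,T];L^2(\Omega))\cap C^0([0,T];\Dom(A))$, whence $w=e^{\cdot A}w(0)\equiv0$ (or, directly, by Gr\"onwall applied to $t\mapsto\|w(t)\|_{0,\Omega}^2$). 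I expect the genuine obstacle to be the boundary-trace bookkeeping of the preceding paragraph — verifying that \eqref{co1}--\eqref{co2} are precisely the compatibility conditions placing $F_0(0)$ and $F_0'(0)$ in $\Dom(A^{k-1})$ — the only other non-routine ingredient being the uniform elliptic regularity (domain characterization and norm equivalence) on the unbounded cylinder.
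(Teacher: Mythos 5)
Your argument is correct and shares the paper's overall architecture — subtract $G_0$ to homogenize the data, characterize $\Dom(A_q^{m})$ inside $H^{2m}(\Omega)$ via elliptic regularity on the cylinder, and verify that the (affine-in-$t$) source lands in $\Dom(A_q^{k-1})$ precisely because of \eqref{co1}--\eqref{co2}; your commutator identity for $(\Delta-q)^m(\Delta-q_0)u_0$ and the Leibniz bookkeeping for $(q-q_0)u_0$ do the same job as the paper's use of $\Dom(A_q^{k-1})=\Dom(A_{q_0}^{k-1})$ together with \eqref{co1}. Where you genuinely diverge is the time-regularity step: the paper proves a general higher-order regularity result (Proposition \ref{pr-reg}) by induction on $k$, reapplying the abstract m-accretive evolution lemma (Lemma \ref{lm-evo}) to $w=v'$ on the scale of spaces $\Dom(A_q^{j})$, and this works for any source in $\cap_{j}C^{j}([0,T];\Dom(A_q^{k-j}))$; you instead exploit the fact that the source is affine in $t$ to write the derivatives $v^{(j)}$ explicitly via Duhamel and the unitary group $e^{-itA_q}$, which is more elementary and gives the estimate at once, at the price of being tied to this particular structure of $G_0$ (the paper's Remark about more general boundary data would not survive your shortcut). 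To be fully rigorous you should define the candidate functions by your explicit formulas and check by integration that they are the successive derivatives of the mild solution, rather than differentiating a regularity you have not yet established; this is routine. The one ingredient you assert without proof — the domain characterization $\Dom(A_q^{m})=\{w\in H^{2m}(\Omega):\,A_q^{j}w\in H_0^1(\Omega),\ j\le m-1\}$ with uniform norm equivalence on the unbounded cylinder — is exactly where the paper invests real work (Lemma \ref{lm-re}, proved by partial Fourier decomposition in $x_n$ and uniform resolvent bounds for the fibers $-\Delta_{x'}+p^2$, followed by Proposition \ref{pr-dom}); you correctly identify both the statement and the Fourier route, so this is a deliberate omission rather than a gap, but it is the technical heart of the direct problem.
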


\begin{remark}
It can be seen from the reasonning developped in section \ref{sec-direct} that Theorem \ref{thm-reg} may be generalized to the case of more general boundary conditions than \eqref{co0}. Nevertheless, in order to avoid the inadequate expense of the size of this article, we shall not go into this matter further.
\end{remark}

We now consider the natural number
\bel{defell}
\ell \in \N^* \cap \left( n \slash 4 , n \slash 4 +1 \right].
\ee
Then, applying Theorem \ref{thm-reg} with $k=\ell+1$, we get that
%conditions \eqref{co0}, \eqref{co1} and the fact that $u_0 \in H^{2(\ell+2)}(\Omega)$ imply that these problems admits a unique solution 
$u \in C^1([0,T]; H^{2 \ell}(\Omega))$ and the estimate $\| u \|_{C^1([0,T];H^{2 \ell}(\Omega))} \leq C \| u_0 \|_{2(\ell+2),\Omega}$. 
Since $2 \ell > n \slash 2$ then $H^{2 \ell}(\Omega)$ is continuously embedded in $L^{\infty}(\Omega)$. This assertion, that is established by Lemma \ref{lm-SET} in section \ref{sec-SET}, extends the corresponding well known Sobolev embedding theorem in $\R^n$ (see e.g. \cite{Br}[Corollary IX.13] or \cite{Ev}[\S 5.10, Problem 18]) to the case of the unbounded cylindrical domain $\Omega$. This immediately entails the:

\begin{follow}
\label{cor-bounded}
Under the conditions of Theorem \ref{thm-reg} for $k=\ell+1$, where $\ell$ is defined by \eqref{defell}, the solution $u$ to \eqref{eq1}-\eqref{co0} satisfies \eqref{ras} and the estimate
$$
\| u \|_{C^1([0,T],L^\infty(\Omega))} \leq C \| u_0 \|_{2(\ell+2),\Omega}.
$$
Here $C>0$ is a constant depending only on $T$, $\omega$ and $\max(\| q_0 \|_{W^{2(\ell+1),\infty}(\Omega)}, \| q \|_{W^{2(\ell+1),\infty}(\Omega)})$.
\end{follow}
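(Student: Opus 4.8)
The plan is to simply combine Theorem \ref{thm-reg}, specialized to $k=\ell+1$, with the Sobolev embedding $H^{2\ell}(\Omega) \hookrightarrow L^\infty(\Omega)$ supplied by Lemma \ref{lm-SET}. Since the hypotheses of the corollary are precisely those of Theorem \ref{thm-reg} with $k=\ell+1$, nothing new has to be verified on the side of the data $(q_0,u_0,q)$.

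First I would invoke Theorem \ref{thm-reg} with $k=\ell+1$: it yields a unique solution $u \in \cap_{j=0}^{\ell+1} C^j([0,T];H^{2(\ell+1-j)}(\Omega))$ to \eqref{eq1}-\eqref{co0}, together with the bound \eqref{eq2}. Retaining only the terms $j=0$ and $j=1$ in \eqref{eq2}, this gives in particular $u \in C^1([0,T];H^{2\ell}(\Omega))$ and the estimate $\| u \|_{C^1([0,T];H^{2\ell}(\Omega))} \leq C \| u_0 \|_{2(\ell+2),\Omega}$, where $C>0$ depends only on $T$, $\omega$ and $\max(\| q_0 \|_{W^{2(\ell+1),\infty}(\Omega)}, \| q \|_{W^{2(\ell+1),\infty}(\Omega)})$.

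Next, since $\ell$ is defined by \eqref{defell} we have $\ell > n \slash 4$, hence $2\ell > n \slash 2$, so Lemma \ref{lm-SET} applies and provides a constant $C_S>0$ such that $\| w \|_{L^\infty(\Omega)} \leq C_S \| w \|_{2\ell,\Omega}$ for every $w \in H^{2\ell}(\Omega)$. Applying this inequality to $w=u(t,\cdot)$ and to $w=u'(t,\cdot)$ for each fixed $t \in [0,T]$, and then taking the supremum over $t \in [0,T]$, I obtain that $u \in C^1([0,T];L^\infty(\Omega))$, which is exactly \eqref{ras}, along with $\| u \|_{C^1([0,T],L^\infty(\Omega))} \leq C_S \| u \|_{C^1([0,T];H^{2\ell}(\Omega))} \leq C_S C \| u_0 \|_{2(\ell+2),\Omega}$; renaming $C_S C$ as $C$ gives the claimed estimate, with a constant depending only on the quantities listed in the statement.

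There is no genuine obstacle once Theorem \ref{thm-reg} and Lemma \ref{lm-SET} are granted; the only point demanding a little attention is to check that the Sobolev regularity index $2\ell$ produced by Theorem \ref{thm-reg} for $k=\ell+1$ is strictly larger than $n \slash 2$ — which is precisely what the choice \eqref{defell} of $\ell$ ensures — and to keep track of the dependence of the successive constants so that the final constant involves only $T$, $\omega$ and $\max(\| q_0 \|_{W^{2(\ell+1),\infty}(\Omega)}, \| q \|_{W^{2(\ell+1),\infty}(\Omega)})$.
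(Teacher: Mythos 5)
Your proposal is correct and follows exactly the paper's own route: the authors likewise apply Theorem \ref{thm-reg} with $k=\ell+1$ to get $u \in C^1([0,T];H^{2\ell}(\Omega))$ with the bound $\| u \|_{C^1([0,T];H^{2\ell}(\Omega))} \leq C \| u_0 \|_{2(\ell+2),\Omega}$, and then use $2\ell > n/2$ together with the embedding $H^{2\ell}(\Omega) \subset L^\infty(\Omega)$ from Lemma \ref{lm-SET}. Nothing is missing.
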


For $q_0$ (and $u_0$) as in Theorem \ref{thm-reg}, we aim to retrieve real-valued scalar potentials $q$ verifying
\bel{co3}
| q(x',x_n)-q_0(x',x_n) | \leq a e^{- b \langle x_n \rangle^{d_\epsilon}},\ (x',x_n) \in \Omega,
\ee
where $a>0$, $b>0$, $\epsilon>0$ and $d_\epsilon \in ( 2(1+\epsilon) \slash 3,+\infty)$ are {\it a priori} fixed constants. Here and henceforth the notation $\langle t \rangle $ stands for $(1+t^2)^{1 \slash 2}$, $t \in \R$. Notice that \eqref{co3} weakens the compactness condition imposed in \cite{KPS1}[Theorem 1.1] on the support of the unknown part of $q$. Namely, we introduce the set of admissible potentials as
$$
\mathcal{A}_{\epsilon}(q_0) := \{ q \in W^{2(\ell+1),\infty}(\Omega)\cap \mathcal C^{2 \ell}(\overline{\Omega};\R)\ \mbox{verifying}\ \eqref{co2}\ \mbox{for}\ k=\ell+1\ \mbox{and}\ \eqref{co3} \}.
$$
Our main result on the above mentioned inverse problem is as follows.

\begin{theorem}
\label{thm-inv} 
Let $\pd \omega$, $q_0$ and $u_0$ obey the conditions of Theorem \ref{thm-reg} for $k=\ell+1$, where $\ell$ is the same as in \eqref{defell}. Assume moreover that there are two constants $\upsilon_0>0$ and $\epsilon>0$ such that we have
\bel{co4}
| u_0(x',x_n) |\geq \upsilon_0 \langle x_n \rangle^{-(1+\epsilon) \slash 2},\ (x',x_n) \in \Omega.
\ee
For $M>0$ fixed, we consider two potentials $q_j$, $j=1,2$, in $\mathcal{A}_\epsilon(q_0)$, such that $\| q_j \|_{W^{2(\ell+1),\infty}(\Omega)}\leq M$, and we note $u_j$ the solution to \eqref{eq1}-\eqref{co0} where $q_j$ is substituted for $q$, given by Theorem \ref{thm-reg}. Then, for all $\delta \in (0,b)$, where $b$ is the same as in \eqref{co3}, there exists a
subboundary $\gamma_* \subset \pd \omega$ and a constant $C>0$, depending only on $\omega$, $T$, $M$, $\| u_0 \|_{2 (\ell + 2),\Omega}$, $\delta$, $\epsilon$, $a$, $b$ and $\upsilon_0$, such that the estimate
\bel{stab-ineq}
\| q_1-q_2 \|_{0,\Omega} \leq C \| \partial_\nu u_1'-\partial_\nu u_2' \|_{0, \Sigma_*}^\theta,
\ee
holds for $\Sigma_*:=(0,T) \times \gamma_* \times \R$ and $\theta:=(b-\delta) \slash (2b-\delta)$.
\end{theorem}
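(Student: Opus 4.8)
The plan is to reduce the inverse problem on the unbounded cylinder $\Omega = \omega \times \R$ to a Carleman-estimate argument on a sequence of bounded truncations $\Omega_R = \omega \times (-R,R)$, then pass to the limit. The starting point is the standard linearization: set $u = u_1 - u_2$ and $q = q_2 - q_1$. Then $u$ solves $-iu' - \Delta u + q_1 u = q\, u_2$ in $Q$, with $u(0,\cdot) = 0$ (since both solutions share the initial datum $u_0$) and $\gamma_0 u = 0$ on $\Sigma$ (since both share $g$). Differentiating in time and writing $v = u'$, one gets a Schrödinger equation for $v$ with source essentially $q\, u_2'$ and, crucially, $v(0,x) = u'(0,x)$ which by the equation equals $i(-\Delta + q_1)u_0$ evaluated against... more precisely, from \eqref{co0} the choice of $g$ is rigged so that $u_j'(0,x) = i(\Delta - q_0)u_0(x)$ and hence $v(0,x) = u_1'(0,\cdot) - u_2'(0,\cdot)$ picks up $i\,q(x)\,u_0(x)$ — this is the standard Bukhgeim–Klibanov trick, and \eqref{co4} is exactly what guarantees $|v(0,x)| \gtrsim |q(x)|\langle x_n\rangle^{-(1+\epsilon)/2}$ is bounded below by $|q|$ times a controlled weight. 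Corollary \ref{cor-bounded} supplies the needed $C^1([0,T];L^\infty)$ bound \eqref{ras} on $u_1,u_2$ so that all these manipulations and the forthcoming energy estimates are licit.

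Next I would invoke a global Carleman estimate for the Schrödinger operator $-i\partial_t - \Delta$ on each bounded cylinder $Q_R := (0,T)\times\Omega_R$, with weight $\varphi(t,x) = e^{\lambda\psi(x)}/(t(T-t))$ for a suitable pseudo-convex $\psi$ vanishing on the relevant part of the boundary, and observation on $\Sigma_{*,R} := (0,T)\times\gamma_*\times(-R,R)$. This is the by-now-classical Baudouin–Puel estimate; the subboundary $\gamma_* \subset \partial\omega$ must be chosen (as in \cite{BP,KPS1}) so that $\nabla\psi \cdot \nu > 0$ there, which controls the sign of the boundary terms. Applying this to $v$ on $Q_R$, multiplying by the cutoff in $x_n$, and using the standard argument of evaluating at $t = T/2$ after integrating $\partial_t(e^{2s\varphi}|v|^2)$, one obtains a bound of the shape
\beq
\int_{\Omega_R} |q(x)|^2 |u_0(x)|^2 e^{2s\varphi(T/2,x)}\,dx \leq C\left( \|\partial_\nu v\|_{0,\Sigma_{*,R}}^2 + \text{(terms from the truncation)} \right).
\eeq
The truncation error terms involve $v$ and $\nabla v$ near $x_n = \pm R$; here one uses the $H^{2\ell}(\Omega)$ bound from Theorem \ref{thm-reg} (which is finite, hence the tail $\int_{|x_n|>R}$ is small) together with the decay \eqref{co3} of $q$ — this is precisely why the exponent $d_\epsilon > 2(1+\epsilon)/3$ and the interplay with the polynomial weight in \eqref{co4} are imposed: the Gaussian-type weight $e^{2s\varphi}$ grows in $x_n$ like $e^{c s \langle x_n\rangle^?}$ through the choice of $\psi$, but the Carleman parameter $s$ is then tuned (balanced against $\|\partial_\nu v\|_{\Sigma_*}$ as is standard) so that the product of the potential decay and the weight still lets the $x_n$-tail be absorbed.

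The final step is the optimization in the Carleman parameter $s$: one writes $\|q u_0 e^{s\varphi(T/2,\cdot)}\|^2 \geq e^{2s\kappa}\|q u_0\|^2$ on a fixed region where $\varphi(T/2,\cdot) \geq \kappa$, bounds the right-hand side by $e^{Cs}\|\partial_\nu v\|_{\Sigma_*}^2 + (\text{small tail})$, lets $R \to \infty$, and then chooses $s$ to balance the two terms, yielding $\|q u_0\|_{0,\Omega} \leq C \|\partial_\nu v\|_{0,\Sigma_*}^\theta$ with the stated Hölder exponent $\theta = (b-\delta)/(2b-\delta)$; dividing out by the lower bound \eqref{co4} on $|u_0|$ (which costs a power of $\langle x_n\rangle$, absorbed via Cauchy–Schwarz and the decay of $q$, whence the role of $\delta \in (0,b)$) converts this into \eqref{stab-ineq}. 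I expect the main obstacle to be making the truncation-and-limit argument rigorous: carefully controlling the boundary contributions at $x_n = \pm R$ in the Carleman estimate uniformly in $R$, and verifying that the exponential weight, the decay rate $d_\epsilon$ in \eqref{co3}, and the lower bound weight $(1+\epsilon)/2$ in \eqref{co4} are mutually compatible so that all the $x_n$-tails genuinely vanish as $R \to \infty$ after the $s$-optimization — this bookkeeping, rather than the Carleman estimate itself, is where the real work lies.
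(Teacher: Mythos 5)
Your linearization and Bukhgeim--Klibanov set-up (differentiate in time so that $v=u'$ carries the initial datum $-i\rho u_0$ with $\rho=q_1-q_2$, then recover $\rho u_0$ at $t=0$ from a Carleman estimate, with Corollary \ref{cor-bounded} supplying \eqref{ras}) is the paper's skeleton. But the two points you yourself flag as ``where the real work lies'' are exactly where your route diverges, and in both places your description would not go through as written. First, the paper never truncates to $\Omega_R$ and passes to the limit: it applies a global Carleman estimate already established on the \emph{unbounded} cylinder (Proposition \ref{pr-carleman}, imported from \cite{KPS1}), whose weight $\beta(x)=\tilde{\beta}(x')+K$ depends only on the cross-sectional variable $x'$ and is therefore \emph{bounded} in $x_n$. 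Your picture of a weight ``growing in $x_n$ like $e^{cs\langle x_n\rangle^{?}}$'' does not correspond to anything in the argument, and the uniform-in-$R$ control of the artificial boundary terms at $x_n=\pm R$ (or of cutoff commutators, which involve $v$ and $\nabla v$ on $R<|x_n|<2R$ and do not decay merely because $\rho$ does) is a genuine obstruction that the paper's choice of weight simply avoids. (The paper also symmetrizes in time to $(-T,T)$ rather than evaluating at $t=T/2$; that part is interchangeable.)

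Second, and more seriously, the exponent $\theta=(b-\delta)/(2b-\delta)$ cannot come from ``choosing $s$ to balance the two terms'': a balance in the Carleman parameter alone yields an exponent governed by the weight geometry, not one depending on the decay rate $b$ of $q-q_0$ in \eqref{co3}. The actual mechanism is a second, \emph{spatial} optimization absent from your sketch. After the energy step (Lemma \ref{lm-inv}) one restricts to the window $\omega\times(-y,y)$, uses \eqref{co4} to get $|u_0|\geq\upsilon_0\langle y\rangle^{-(1+\epsilon)/2}$ there, and absorbs the source term $s^{-3/2}\|e^{-s\eta(0,\cdot)}\rho u_2'\|^2$ into the left-hand side by taking $s\sim\langle y\rangle^{2(1+\epsilon)/3}$ --- this is where the exponent $2(1+\epsilon)/3$, and hence the hypothesis $d_\epsilon>2(1+\epsilon)/3$, enter. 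One is left with
\[
\| \rho \|_{0,\omega\times(-y,y)}^2 \;\leq\; C\, e^{C\langle y\rangle^{2(1+\epsilon)/3}}\left( e^{-(2b-\delta)\langle y\rangle^{d_\epsilon}} + \mu \right),
\qquad \mu:=\| \partial_\nu u_1'-\partial_\nu u_2' \|_{0,\Sigma_*}^2,
\]
the first term being the tail $\|\rho\|^2_{0,\omega\times(\R\setminus(-y,y))}$ estimated by \eqref{co3}. Choosing $\langle y\rangle^{d_\epsilon}=-\ln\mu/(2b-\delta)$ and using $d_\epsilon>2(1+\epsilon)/3$ to dominate the loss factor produces $\mu^{2\theta}$ with $\theta=(b-\delta)/(2b-\delta)$. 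Without this $y$-versus-$\mu$ optimization your argument has no route to the stated exponent, so the gap is structural rather than bookkeeping.
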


It is evident that Theorem \ref{thm-inv} yields uniqueness in the identification of the scalar potential in $\mathcal{A}_\eps(q_0)$ from the knowledge of partial Neumann data for the time-derivative of the solution to \eqref{eq1}-\eqref{co0}:
$$ \forall (q_1,q_2) \in \mathcal{A}_\eps(q_0)^2,\ \left( \partial_\nu u_1'(t,x) = \partial_\nu u_2'(t,x),\ (t,x) \in \Sigma_* \right) \Longrightarrow \left( q_1(x)=q_2(x),\ x \in \Omega \right). $$

Further, it is worth mentioning that Theorem \ref{thm-inv} applies for any subboundary $\Gamma^*=\gamma^* \times \R$ such that
$$ \gamma_* \supset \{x'\in\partial \omega,\ (x'-x_0')\cdot\nu'(x')\geq0\}, $$
for some arbitrary $x_0' \in \R^{n-1} \setminus \overline{\omega}$, where $\nu'(x') \in \R^{n-1}$ denotes the outgoing normal vector to $\partial \omega$ computed at $x'$.
See Assumption \ref{funct-beta} in subsection \ref{sec-Carleman} for more details.

Moreover we stress out that there are actual scalar potentials $q_0$ and initial values $u_0$ fulfilling the conditions of Theorem \ref{thm-inv}. For the sake of completeness a whole class of such matching $q_0$'s and $u_0$'s is indeed exhibited in the concluding remark ending the paper.

The remaining part of the paper is organized with two main sections. Section \ref{sec-direct} contains the proofs of Theorem \ref{thm-reg} and Corollary \ref{cor-bounded}, while Theorem \ref{thm-inv} is shown in section \ref{sec-inv}.

%%%%%%%%%%%%%%%%%%%%%%%%%%%%%%%%%%%%%%%%%%%%%%%%%%%%%%%%%%%%%%%%%%%%%%%%%%%%%%%%%%%%%%%%%%%%%%%%%%%%%%%%%%%%%%%%%%%%%%%%%%%%%%%%%%%%%%%%%%%%%%%%%%%%%%%%%%%%%%%%%%%%%%%%%%%%%%%%%%%
%%%%%%%%%%%%%%%%%%%%%%%%%%%%%%%%%%%%%%%%%%%%%%%%%%%%%%%%%%%%%%%%%%%%%%%%%%%%%%%%%%%%%%%%%%%%%%%%%%%%%%%%%%%%%%%%%%%%%%%%%%%%%%%%%%%%%%%%%%%%%%%%%%%%%%%%%%%%%%%%%%%%%%%%%%%%%%%%%%%
%%%%%%%%%%%%%%%%%%%%%%%%%%%%%%%%%%%%%%%%%%%%%%%%%%%%%%%%%%%%%%%%%%%%%%%%%%%%%%%%%%%%%%%%%%%%%%%%%%%%%%%%%%%%%%%%%%%%%%%%%%%%%%%%%%%%%%%%%%%%%%%%%%%%%%%%%%%%%%%%%%%%%%%%%%%%%%%%%%%

\section{Analysis of the direct problem}
\label{sec-direct}
In this this section we examine the direct problem associated to \eqref{eq1}. To this end we introduce the Dirichlet Laplacian
\bel{dom0} 
A_0:=-\Delta,\ \Dom(A_0):=H_0^1(\Omega) \cap H^2(\Omega), 
\ee
which is selfadjoint in $\HH:=L^2(\Omega)$ (see e.g. \cite{CKS}[Lemma 2.2]), provided $\partial \omega$ is $C^2$.
We perturb $A_0$ by the scalar potential $q \in L^\infty(\Omega)$. Since $q$ is assumed to be real then the operator 
$$ A_q:=A_0+q,\ \Dom(A_q):=\Dom(A_0)=H_0^1(\Omega) \cap H^2(\Omega),$$ 
is selfadjoint in $\HH=L^2(\Omega)$. 

\subsection{Abstract evolution problem}
To study the direct problem associated to \eqref{eq1} we consider the abstract evolution problem
\bel{evo1}
\left\{ \begin{array}{rcll} -i v'(t) + A_q v(t)& = & f(t), & t \in (0,T)\\ v(0) &= & v_0, & 
\end{array}\right.
\ee
with initial data $v_0$ and source $f$. We shall derive smoothness properties of the solution to \eqref{evo1} with the aid of the following technical result, 
which is borrowed from \cite{CKS}[Lemma 2.1]:
% It is well known that any $C^0([0,T];X)$-solution $v$ to \eqref{evo1} is expressed by Duhamel's formula:

\begin{lemma}
\label{lm-evo}
Let $X$ be a Banach space and let $M$ be an m-accretive operator in $X$ with dense domain $\Dom(M)$. Assume that $v_0 \in \Dom(M)$ and $h \in C^1([0,T];X)$.
Then there is a unique solution $v \in C^1([0,T];X) \cap C^0([0,T];\Dom(M))$ to the following evolution problem
\bel{evo2}
\left\{ \begin{array}{rcll} v'(t) +M v(t) & = & h(t), & t \in (0,T)\\ v(0) &= & v_0. & 
\end{array}\right.
\ee
Moreover the estimate
\bel{evo3} 
\| v\|_{C^0([0,T];\Dom(M))} + \| v' \|_{C^0([0,T];X)} \leq C \left( \| v_0 \|_{\Dom(M)} +  \| h \|_{C^1([0,T];X)} \right),
\ee
holds for some positive constant $C$ depending only on $T$.
\end{lemma}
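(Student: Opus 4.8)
The statement is a classical fact about linear evolution equations governed by an m-accretive operator, and the plan is to derive it from the Hille--Yosida generation theorem combined with the variation-of-constants formula. Since $M$ is m-accretive with dense domain in the Banach space $X$, the operator $-M$ generates a strongly continuous semigroup $\{S(t)\}_{t\geq 0}$ of contractions on $X$, so that $\norm{S(t)}_{\mathcal{L}(X)}\leq 1$ for every $t\geq 0$. I would then introduce the candidate solution through the Duhamel formula
\bel{eq-duhamel}
v(t):=S(t)v_0+\int_0^t S(t-s)h(s)\,ds,\quad t\in[0,T].
\ee
Because $v_0\in\Dom(M)$, the first term $t\mapsto S(t)v_0$ lies in $C^1([0,T];X)\cap C^0([0,T];\Dom(M))$, with derivative $-S(t)Mv_0$, so the work concentrates on the convolution term.

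The main point is therefore the regularity of $w(t):=\int_0^t S(t-s)h(s)\,ds$. Performing the change of variable $s\mapsto t-s$ gives $w(t)=\int_0^t S(s)h(t-s)\,ds$, and since $h\in C^1([0,T];X)$ one may differentiate under the integral sign to obtain $w'(t)=S(t)h(0)+\int_0^t S(s)h'(t-s)\,ds\in C^0([0,T];X)$; equivalently, using the closedness of $M$, one checks that $w(t)\in\Dom(M)$ with $Mw(t)=h(t)-w'(t)$ continuous in $t$. Hence $v$ defined by \eqref{eq-duhamel} solves \eqref{evo2} and belongs to $C^1([0,T];X)\cap C^0([0,T];\Dom(M))$. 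Uniqueness is then the standard argument: given any solution $v$ and a fixed $t\in[0,T]$, differentiating $s\mapsto S(t-s)v(s)$ on $[0,t]$ yields $\frac{d}{ds}S(t-s)v(s)=S(t-s)\big(v'(s)+Mv(s)\big)=S(t-s)h(s)$, and integrating over $[0,t]$ recovers \eqref{eq-duhamel}, so the solution is unique.

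It remains to establish the a priori bound \eqref{evo3}. From \eqref{eq-duhamel} and $\norm{S(t)}_{\mathcal{L}(X)}\leq 1$ we immediately get $\norm{v(t)}_X\leq\norm{v_0}_X+T\norm{h}_{C^0([0,T];X)}$. To control $\norm{Mv(t)}_X$ I would observe that $z:=v'$ solves $z'+Mz=h'$ with $z(0)=h(0)-Mv_0$; applying the same Duhamel representation and contractivity to $z$ gives $\norm{z(t)}_X\leq\norm{h(0)}_X+\norm{Mv_0}_X+T\norm{h'}_{C^0([0,T];X)}$, whence $\norm{Mv(t)}_X=\norm{h(t)-z(t)}_X\leq\norm{h}_{C^0([0,T];X)}+\norm{z}_{C^0([0,T];X)}$. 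Summing these inequalities produces \eqref{evo3} with a constant $C$ depending only on $T$.

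There is no genuine obstacle in this argument: the only delicate points are justifying the differentiation of the convolution term (equivalently, its membership in $\Dom(M)$ together with closedness of $M$) and verifying that the constant in \eqref{evo3} depends on $T$ alone. This explains why the authors prefer to quote the lemma from \cite{CKS} rather than reproduce its proof here.
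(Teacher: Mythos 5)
The paper gives no proof of this lemma at all---it is quoted verbatim from \cite{CKS}[Lemma 2.1]---so there is no in-paper argument to compare against; your proof is the standard semigroup argument (Lumer--Phillips to get the contraction semigroup generated by $-M$, then Duhamel plus the classical regularity of the convolution term for $C^1$ inhomogeneities), and it is correct. One small point worth tightening: you assert that $z:=v'$ \emph{solves} $z'+Mz=h'$ with $z(0)=h(0)-Mv_0$, but a priori $v$ is only $C^1([0,T];X)$, so $z$ need not be differentiable and is only a mild solution of that problem. This does not affect the estimate, because the integral representation $v'(t)=S(t)\left(h(0)-Mv_0\right)+\int_0^t S(t-s)h'(s)\,ds$ is obtained directly by differentiating your Duhamel formula, and contractivity applied to that identity already yields $\| v'(t)\|_X\leq \| h(0)\|_X+\| Mv_0\|_X+T\| h'\|_{C^0([0,T];X)}$; the bound on $\| Mv(t)\|_X=\| h(t)-v'(t)\|_X$ and the final constant depending only on $T$ then follow exactly as you say.
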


Assume that $\partial \omega$ is $C^2$, let $q \in L^{\infty}(\Omega)$ be real and choose $v_0 \in \Dom(A_q)$ and $f \in C^1([0,T];\HH)$. Since $A_q$ is selfadjoint in $\HH$ then the operator $i A_q$ is $m$-accretive so Lemma \ref{lm-evo} applies with $X=\HH$, $M=iA_q$ and $h=if$: there is a unique solution 
\bel{evo3a}
v \in C^1([0,T];\HH) \cap C^0([0,T];\Dom(A_q))
\ee
to \eqref{evo1}, such that
\bel{evo3b}
\| v \|_{C^0([0,T];\Dom(A_q))} + \| v' \|_{C^0([0,T];\HH)}\leq C_1 \left( \| v_0 \|_{\Dom(A_q)} +\| f \|_{C^1([0,T];\HH)} \right),
\ee
for some constant $C_1>0$ depending only on $T$. 

We shall improve \eqref{evo3a}-\eqref{evo3b} upon assuming higher regularity on and $\partial \omega$, $q$, $v_0$ and $f$.
This requires that the following notations be preliminarily introduced. First, for the sake of definitiness we set $A_q^0 := I$ and $\Dom(A_q^0):=\HH$,
where $I$ denotes the identity operator in $\HH$. Then we put
$$ A_q^k v := A_q (A_q^{k-1} v),\ v \in \Dom(A_q^k):= \{ v \in \Dom(A_q^{k-1}),\ A_q v \in \Dom(A_q^{k-1}) \}, $$
for each $k \in \N^*$, and recall that the linear space $\Dom(A_q^k)$ endowed with the scalar product
$$ \langle v , w \rangle_{\Dom(A_q^k)} := \sum_{j=0}^k \langle A_q^j v , A_q^j w \rangle_{0,\Omega}, $$
is Hilbertian.

\begin{proposition}
\label{pr-reg}
For $k \geq 2$ fixed, assume that $\partial \omega$ is $C^{2(k+1)}$ and let $q \in W^{2(k-1),\infty}(\Omega)$,
$v_0 \in \Dom(A_q^k)$ and $f \in \cap_{j=1}^{k} C^{j}([0,T];\Dom(A_q^{k-j}))$. Then \eqref{evo1} admits a
unique solution 
$$v \in \cap_{j=0}^k C^{j}([0,T];\Dom(A_q^{k-j})),$$
which satisfies the estimate
\bel{evo3c}
\sum_{j=0}^k \| v^{(j)} \|_{C^0([0,T];\Dom(A_q^{k-j}))} \leq C_k \left( \| v_0 \|_{\Dom(A_q^k)} + \| f \|_{C^0([0,T];\Dom(A_q^{k-1}))}  + \sum_{j=1}^{k} \| f^{(j)} \|_{C^0([0,T];\Dom(A_q^{k-j}))} \right),
\ee
where $C_k$ is a positive constant depending only on $T$.
\end{proposition}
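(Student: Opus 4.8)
The plan is to prove Proposition \ref{pr-reg} by induction on $k\geq 2$, using Lemma \ref{lm-evo} (equivalently the already-established case \eqref{evo3a}-\eqref{evo3b}) as the base case $k=1$, and then lifting regularity one step at a time by differentiating the equation in time and reinterpreting the derivative as a new abstract evolution problem of the same type but with one fewer order of smoothness required. The key observation is that if $v$ solves \eqref{evo1}, then formally $w:=v'$ solves
\bel{evo-diff}
-i w'(t) + A_q w(t) = f'(t),\qquad w(0) = w_0 := i\left( A_q v_0 - f(0) \right),
\ee
which is again of the form \eqref{evo1} with source $f'$ and initial datum $w_0$. The point of the hypothesis $v_0\in\Dom(A_q^k)$ is exactly that $w_0=i(A_q v_0-f(0))\in\Dom(A_q^{k-1})$, since $A_q v_0\in\Dom(A_q^{k-1})$ by definition of $\Dom(A_q^k)$ and $f(0)\in\Dom(A_q^{k-1})$ by the hypothesis $f\in C^0([0,T];\Dom(A_q^{k-1}))$ (which follows from $f\in\cap_{j=1}^k C^j([0,T];\Dom(A_q^{k-j}))$ with $j=1$, after noting $\Dom(A_q^{k-1})\hookrightarrow\Dom(A_q^{k-j})$). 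Likewise $f'\in\cap_{j=1}^{k-1}C^j([0,T];\Dom(A_q^{k-1-j}))$, so the inductive hypothesis at level $k-1$ applies to \eqref{evo-diff} and yields $w=v'\in\cap_{j=0}^{k-1}C^j([0,T];\Dom(A_q^{k-1-j}))$ together with the corresponding estimate.

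To complete the induction step I still need to recover the top-order spatial regularity $v\in C^0([0,T];\Dom(A_q^k))$, which is not yet provided by the information on $v'=w$. This comes from the equation itself rewritten algebraically: from \eqref{evo1} we have $A_q v(t) = f(t) + i v'(t) = f(t) + i w(t)$. The right-hand side lies in $C^0([0,T];\Dom(A_q^{k-1}))$ — the first term by hypothesis, the second because $w\in C^0([0,T];\Dom(A_q^{k-1}))$ — hence $A_q v(t)\in\Dom(A_q^{k-1})$ and $v(t)\in\Dom(A_q^{k-1})$ (the latter, e.g., from the level-$(k-1)$ conclusion since $v$ also solves an evolution problem, or simply because $v_0\in\Dom(A_q^k)\subset\Dom(A_q^{k-1})$ and the $k-1$ case applies to $v$ directly). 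Therefore $v(t)\in\Dom(A_q^k)$ for every $t$, and continuity in $t$ with values in $\Dom(A_q^k)$ follows from continuity of $t\mapsto A_q v(t)=f(t)+iw(t)$ into $\Dom(A_q^{k-1})$. The norm bound \eqref{evo3c} is then assembled by combining: the level-$(k-1)$ estimate applied to \eqref{evo-diff}, the bound $\|w_0\|_{\Dom(A_q^{k-1})}\leq \|A_q v_0\|_{\Dom(A_q^{k-1})} + \|f(0)\|_{\Dom(A_q^{k-1})}\leq \|v_0\|_{\Dom(A_q^k)} + \|f\|_{C^0([0,T];\Dom(A_q^{k-1}))}$, and the algebraic identity $\|v(t)\|_{\Dom(A_q^k)}^2 = \|v(t)\|_{0,\Omega}^2 + \|A_q v(t)\|_{\Dom(A_q^{k-1})}^2 \lesssim \|v(t)\|_{0,\Omega}^2 + \|f(t)\|_{\Dom(A_q^{k-1})}^2 + \|w(t)\|_{\Dom(A_q^{k-1})}^2$, with $\|v(t)\|_{0,\Omega}$ controlled by the $k=1$ estimate \eqref{evo3b}. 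Tracking the constants shows $C_k$ depends only on $T$ (and, implicitly, on $k$, which is fixed).

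One technical point deserving care is the justification that the formal computation leading to \eqref{evo-diff} is legitimate, i.e. that the solution $v$ produced at level $k\geq 2$ is genuinely $C^1$ in time with the stated values, so that $v'$ makes sense as a strong solution of \eqref{evo-diff} rather than merely a distributional one. This is handled by first applying the already-known case \eqref{evo3a}-\eqref{evo3b} (with the regularity of $q$ and $\partial\omega$ being more than sufficient) to get $v\in C^1([0,T];\HH)\cap C^0([0,T];\Dom(A_q))$, then defining $w$ as the solution of \eqref{evo-diff} given by the inductive hypothesis, and finally checking that $t\mapsto v_0+\int_0^t w(s)\,ds$ solves \eqref{evo1} and hence equals $v$ by uniqueness — so $v'=w$ with all the claimed regularity. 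The main obstacle is thus not any single deep estimate but rather the bookkeeping: correctly matching up the decreasing ladder of domains $\Dom(A_q^{k-j})$ under time-differentiation, verifying the nesting inclusions $\Dom(A_q^{m})\hookrightarrow\Dom(A_q^{m'})$ for $m\geq m'$ with continuous embedding, and ensuring at each stage that the hypothesis $q\in W^{2(k-1),\infty}(\Omega)$ and $\partial\omega\in C^{2(k+1)}$ suffices for $\Dom(A_q^j)$ to coincide (with equivalent norms) with the appropriate $H^{2j}$-type space so that the elliptic-regularity content is available when later invoked in the proof of Theorem \ref{thm-reg}; within the proof of the present proposition, however, everything can be phrased intrinsically in terms of the graph norms of powers of $A_q$, which keeps the argument clean.
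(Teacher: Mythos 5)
Your proof is correct and follows the same overall induction as the paper's: differentiate \eqref{evo1} in time, apply the induction hypothesis to $w=v'$ (source $f'$, initial datum $v'(0)$), recover $v\in C^0([0,T];\Dom(A_q^k))$ algebraically from $A_q v=f+iv'$, and assemble \eqref{evo3c} from the level-$(k-1)$ estimate together with $\| v'(0)\|_{\Dom(A_q^{k-1})}\leq\|v_0\|_{\Dom(A_q^k)}+\|f(0)\|_{\Dom(A_q^{k-1})}$. The one place where you genuinely diverge is the justification that $v'$ is a strong solution of the differentiated problem. The paper settles this at the level $k=2$ by transporting Lemma \ref{lm-evo} into the graph space $\tilde{\HH}:=\Dom(A_q)$: it verifies that $i\tilde{A}_q$ (the restriction of $iA_q$ with domain $\Dom(A_q^2)$, viewed as an operator in $\tilde{\HH}$) is $m$-accretive there, which yields $v\in C^1([0,T];\Dom(A_q))\cap C^0([0,T];\Dom(A_q^2))$ directly and makes $t\mapsto A_qv(t)$ manifestly $C^1$ into $\HH$, so the equation can be differentiated outright. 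You instead stay at the base level: you first solve the differentiated problem for $w$ by the induction hypothesis, then integrate it back via $\tilde{v}(t):=v_0+\int_0^t w(s)\,ds$, check (using closedness of $A_q$ to commute it with the integral) that $\tilde{v}$ solves \eqref{evo1}, and conclude $\tilde{v}=v$, hence $v'=w$, by the uniqueness in Lemma \ref{lm-evo}. Both routes are sound; yours avoids verifying $m$-accretivity in the graph norm at the price of the integration-and-uniqueness detour, while the paper's gives the $C^1([0,T];\Dom(A_q))$ regularity up front. Two small points: the initial datum of the differentiated problem should be $v'(0)=i\left(f(0)-A_qv_0\right)$, not $i\left(A_qv_0-f(0)\right)$ (a harmless sign slip for the norm estimates); and, as you note yourself, the identification of $\Dom(A_q^j)$ with $H^{2j}$-type spaces is not needed inside this proposition, which indeed lives entirely in the graph norms.
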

\begin{proof}
The proof is by induction on $k \geq 2$. \\
a) Suppose that $k=2$. We consider the space $\tilde{\HH}:=\Dom(A_q)$, which is Hilbertian for the scalar product $\langle v ,w \rangle_{\tilde{\HH}} := \langle v ,w \rangle_{\Dom(A_q)}$, and define the operator $\tilde{A}_q$, acting in $\tilde{\HH}$, by
$$ \tilde{A}_q v := A_q v,\ v \in \Dom(\tilde{A}_q) := \Dom(A_q^2). $$
Since $q$ is real, then the sesquilinear form $a_q[w]:=\|\nabla w \|_\HH^2 + \langle q w , w \rangle_\HH \in \R$ for every $w \in H_0^1(\Omega)$, hence
$$ \Pre{ \langle i \tilde{A}_q v , v \rangle_{\tilde{\HH}} } = \Pre{i (a_q[v] + a_q[A_q v])} = 0,\ v \in \Dom(\tilde{A}_q). $$
Moreover, for all $\varphi \in \tilde{\HH}$ it is clear that the vector $v_\varphi := (i A_q + 1)^{-2} (i \tilde{A_q} +1) \varphi \in \Dom(\tilde{A}_q)$ verifies 
$(i \tilde{A}_q + 1) v_\varphi = \varphi$, proving that $\Ran (i \tilde{A}_q + 1) = \tilde{\HH}$. Thus $i \tilde{A}_q$ is $m$-accretive in $\tilde{\HH}$. Therefore, applying Lemma \ref{lm-evo} for $X=\tilde{\HH}$, $M=i \tilde{A}_q$ and $h=if$, we see that there is a unique function $v \in C^1([0,T];\Dom(A_q)) \cap C^0([0,T];\Dom(A_q^2))$, that is solution to
\bel{evo5}
\left\{ \begin{array}{rcll} -i v'(t) + \tilde{A}_q v(t)& = & f(t), & t \in (0,T)\\ v(0) &= & v_0, & 
\end{array}\right.
\ee
and fulfills
\bel{evo6} 
\| v \|_{C^0([0,T];\Dom(A_q^2))} + \| v' \|_{C^0([0,T];\Dom(A_q))} \leq C_1 \left( \| v_0 \|_{\Dom(A_q^2)} +  \| f \|_{C^1([0,T];\Dom(A_q))} \right).
\ee
Since $v$ is solution to \eqref{evo5}, it is also solution to \eqref{evo1}. Hence $v$ is the solution to \eqref{evo1}, by uniqueness. Further, since $A_q$ is linear bounded from $\Dom(A_q)$ into $\HH$ and $v \in C^1([0,T];\Dom(A_q))$, we have $A_q v \in C^1([0,T];\HH)$ with 
\bel{evo7}
(A_q v)'(t) = A_q v'(t).
\ee
In light of \eqref{evo1} this entails that $v'(t)=i (f(t) - A_q v(t) ) \in C^1([0,T];\HH)$. As a consequence we have $v \in C^2([0,T];\HH)$ and
\bel{evo8}
-i v''(t) + A_q v'(t) = f'(t),\ t \in (0,T),
\ee 
according to \eqref{evo7}. Moreover \eqref{evo8} yields
$$ \| v'' \|_{C^0([0,T];\HH)} \leq  \| A_q v'\|_{C^0([0,T];\HH)} + \| f' \|_{C^0([0,T];\HH)} \leq \| v'\|_{C^0([0,T];\Dom(A_q))} + \| f' \|_{C^0([0,T];\HH)}. $$
From this and \eqref{evo6} then follows that
$$ 
\sum_{j=0}^2 \| v^{(j)} \|_{C^0([0,T];\Dom(A_q^{2-j}))} \leq \max(1,2 C_1) \left( \| v_0 \|_{\Dom(A_q^2)} +  \| f \|_{C^1([0,T];\Dom(A_q))} \right),
$$
which entails \eqref{evo3c} for $k=2$.\\
b) Let us now prove the result for $k \geq 3$. We suppose that the assertion holds true for all nonzero natural number smaller or equal to $k-1$ and choose $v_0 \in \Dom(A_q^k)$ and
$f \in \cap_{j=1}^{k} C^{j}([0,T];\Dom(A_q^{k-j}))$. We know from part a) that the solution $v$ of \eqref{evo1} is lying in $C^{2}([0,T];\HH) \cap C^{1}([0,T];\Dom(A_q))$ and satisfies \eqref{evo8}. Putting $w=v'$ we thus find out that $w \in C^{1}([0,T];\HH) \cap C^{0}([0,T];\Dom(A_q))$ is solution to the problem
$$
\left\{ \begin{array}{rcll} -i w'(t) + A_q w(t)& = & f'(t), & t \in (0,T)\\ w(0) &= & i ( f(0)-A_q v_0). & 
\end{array}\right.
$$
Since $f(0)-A_q v_0 \in \Dom(A_q^{k-1})$ and $f' \in \cap_{j=1}^{k-1} C^{j}([0,T];\Dom(A_q^{k-1-j}))$, we deduce from the induction hypothesis that
$w \in \cap_{j=0}^{k-1} C^j([0,T];\Dom(A_q^{k-1-j}))$
and that $\sum_{j=0}^{k-1} \| w^{(j)} \|_{C^0([0,T];\Dom(A_q^{k-1-j}))}$ is majorized, up to the multiplicative constant $C_{k-1}$, by the following upper bound:
\beas
&  &   \| f(0)- A_q v_0  \|_{\Dom(A_q^{k-1})} + \| f' \|_{C^0([0,T];\Dom(A_q^{k-2}))} + \sum_{j=1}^{k-1} \| f^{(j+1)} \|_{C^0([0,T];\Dom(A_q^{k-1-j}))} \\
& \leq & \| v_0  \|_{\Dom(A_q^{k})} + \| f \|_{C^0([0,T];\Dom(A_q^{k-1}))} + \| f' \|_{C^0([0,T];\Dom(A_q^{k-2}))} + \sum_{j=2}^{k} \| f^{(j)} \|_{C^0([0,T];\Dom(A_q^{k-j}))}.
\eeas
This entails
$$
\sum_{j=0}^{k-1} \| w^{(j)} \|_{C^0([0,T];\Dom(A_q^{k-1-j}))} \leq C_{k-1} \left( \| v_0  \|_{\Dom(A_q^{k})} + \| f \|_{C^0([0,T];\Dom(A_q^{k-1}))} + \sum_{j=1}^{k} \| f^{(j)} \|_{C^0([0,T];\Dom(A_q^{k-j}))} \right).
$$
As a consequence we have
\bel{evo10}
v \in \cap_{j=1}^{k} C^j([0,T];\Dom(A_q^{k-j})),
\ee
and
\bel{evo11}
\sum_{j=1}^{k} \| v^{(j)} \|_{C^0([0,T];\Dom(A_q^{k-j}))}
\leq C_{k-1} \left( \| v_0  \|_{\Dom(A_q^{k})} + \| f \|_{C^0([0,T];\Dom(A_q^{k-1}))} + \sum_{j=1}^{k} \| f^{(j)} \|_{C^0([0,T];\Dom(A_q^{k-j}))} \right).
\ee
Now it remains to show that $v \in C^0([0,T];\Dom(A_q^k))$. This can be seen from \eqref{evo1} and \eqref{evo10}, entailing
$$
A_q v = f + i v' \in C^0([0,T];\Dom(A_q^{k-1})).
$$
Therefore we have $\| v \|_{C^0([0,T];\Dom(A_q^k))} \leq \| f \|_{C^0([0,T];\Dom(A_q^{k-1}))} + \| v' \|_{C^0([0,T];\Dom(A_q^{k-1}))}$, which together with
\eqref{evo11}, yields \eqref{evo3c}.
\end{proof}

\begin{remark}
\label{rm-evo}
In light of \eqref{evo3a}-\eqref{evo3b} it is clear that the statement of Proposition \ref{pr-reg} holds true for $k=1$ as well.
\end{remark}

\subsection{Characterizing $\Dom(A_q^k)$ for $k \in \N^*$}
In light of Proposition \ref{pr-reg} we need to describe more explicitely the domain of the operator $A_q^k$, $k \in \N^*$. Namely, under suitable assumptions on $\partial \omega$ and $q$ we shall characterize $\Dom(A_q^k)$ as a subset of $H^{2k}(\Omega)$. The main tool we use in the derivation of this result, stated in Proposition \ref{pr-dom}, is the elliptic boundary regularity property in $\Omega$ established in Lemma \ref{lm-re}.

\subsubsection{Elliptic boundary regularity in $\Omega$}
We now extend the classical elliptic boundary regularity result for the Dirichlet Laplacian, which is well known in any sufficiently smooth bounded domain of $\R^n$ (see e.g. 
%\cite{DL2}[Chap. VII, \S 3, Theorem 2], 
\cite{LM1}[Chap. 2, Theorem 5.1] or \cite{Ev}[\S 6.3, Theorem 5]), to the case of the unbounded cylindrical waveguide $\Omega$. Namely we consider the following problem
\bel{re0}
\left\{ 
\begin{array}{rccl} 
-\Delta v & = & \varphi & \textrm{in}\ \Omega \\ v &= & 0 & \textrm{on}\ \pd \Omega,
\end{array} \right.
\ee
and establish the:
\begin{lemma}
\label{lm-re} 
Let $\ell \in \N$, assume that $\partial \omega$ is $C^{2 (\ell+1)}$ and choose $\varphi \in H^{2 \ell}(\Omega)$. Then \eqref{re0} admits a unique solution $v \in H^{2(\ell+1)}(\Omega)$,
obeying
\bel{re1}
\| v \|_{ H^{2(\ell+1)}(\Omega))}\leq C_\ell \| \varphi \|_{H^{2 \ell}(\Omega)},
\ee
where $C_\ell>0$ is a constant depending only on $\omega$ and $\ell$.
\end{lemma}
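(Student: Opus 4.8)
The plan is to reduce the problem on the unbounded cylinder $\Omega = \omega \times \R$ to a family of elliptic problems on the bounded cross-section $\omega$ by means of the partial Fourier transform in the axial variable $x_n$, and then to patch the estimates together. First I would establish existence and uniqueness of a solution $v \in H_0^1(\Omega) \cap H^2(\Omega)$: since $-\Delta = A_0$ with $\Dom(A_0) = H_0^1(\Omega) \cap H^2(\Omega)$ is selfadjoint and positive (indeed boundedly invertible, as the first transverse Dirichlet eigenvalue of $-\Delta_{x'}$ on $\omega$ provides a spectral gap), the equation $-\Delta v = \varphi$ has the unique solution $v = A_0^{-1}\varphi \in \Dom(A_0)$ for any $\varphi \in \HH = L^2(\Omega)$, and $\|v\|_{2,\Omega} \leq C \|\varphi\|_{0,\Omega}$ by the $\ell = 0$ case of the classical bounded-domain regularity theory applied slice-wise (or by the closed graph theorem combined with the Fourier-slice argument below). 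This already settles the case $\ell = 0$.

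For $\ell \geq 1$ the key step is to separate the tangential $(x')$ regularity from the axial $(x_n)$ regularity. Applying $\pd_{x_n}^{j}$ for $0 \leq j \leq 2\ell$ to \eqref{re0}, one finds that $v_j := \pd_{x_n}^{j} v$ solves $-\Delta v_j = \pd_{x_n}^{j}\varphi$ in $\Omega$ with $v_j = 0$ on $\pd\Omega$ (the boundary $\pd\Omega = \pd\omega \times \R$ is translation-invariant in $x_n$, so differentiating in $x_n$ commutes with the boundary condition). Writing $-\Delta = -\Delta_{x'} - \pd_{x_n}^2$, this gives $-\Delta_{x'} v_j = \pd_{x_n}^{j}\varphi + v_{j+2}$ on $\omega$ for a.e. $x_n$, with homogeneous Dirichlet data on $\pd\omega$. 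Here $\pd_{x_n}^{j}\varphi \in H^{2\ell - j}(\Omega)$ and, by a downward induction on $j$ from $j = 2\ell$ to $j = 0$, $v_{j+2} \in L^2(\R_{x_n}; H^{2(\ell+1) - (j+2)}(\omega))$ once we know $v_{j+2}$; the classical elliptic estimate on the bounded domain $\omega$ (namely \cite{LM1}[Chap. 2, Theorem 5.1] applied with right-hand side in $H^{2\ell - j}(\omega) \cap L^2$, requiring $\pd\omega \in C^{2(\ell - j) + 2}$, which is guaranteed by $\pd\omega \in C^{2(\ell+1)}$) then yields $v_j \in L^2(\R_{x_n}; H^{2(\ell+1) - j}(\omega))$ together with the bound
$$
\| v_j \|_{L^2(\R; H^{2(\ell+1)-j}(\omega))} \leq C_\ell \left( \| \pd_{x_n}^{j}\varphi \|_{L^2(\R; H^{2\ell - j}(\omega))} + \| v_{j+2} \|_{L^2(\R; H^{2\ell - j}(\omega))} \right),
$$
integrated in $x_n$ (the constant is uniform in $x_n$ because it depends only on $\omega$ and the order). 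Summing over $j = 0, \ldots, 2\ell$ and invoking the characterization $H^{m}(\Omega) = \bigcap_{i+|\alpha'| \leq m} \{ w : \pd_{x_n}^{i}\pd_{x'}^{\alpha'} w \in L^2(\Omega)\}$, i.e. $H^{2(\ell+1)}(\Omega) = \bigcap_{j=0}^{2(\ell+1)} L^2(\R; H^{2(\ell+1)-j}(\omega))$ after also controlling the purely axial derivatives $\pd_{x_n}^{2\ell+1} v$ and $\pd_{x_n}^{2\ell+2} v$ directly from $\pd_{x_n}^{2\ell} v_{\,} \in H^2$-in-$x'$ and the equation, produces \eqref{re1}.

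The main obstacle I anticipate is bookkeeping rather than conceptual: one must make sure that at every stage of the downward induction the right-hand side $\pd_{x_n}^j \varphi + v_{j+2}$ genuinely lies in the Sobolev space $H^{2\ell - j}(\omega)$ over $\omega$ for a.e. $x_n$ with $L^2$-in-$x_n$ control, that the elliptic constant from the bounded-domain theorem is $x_n$-independent (it is, depending only on $\omega$ and the differentiation order), and that the finitely many mixed norms reassemble exactly into $\|v\|_{2(\ell+1),\Omega}$ with no loss. A clean way to organize this is to phrase everything through the partial Fourier transform $\hat v(x',\xi)$ in $x_n$: then $-\Delta_{x'}\hat v + \xi^2 \hat v = \hat\varphi$ on $\omega$ with $\hat v|_{\pd\omega} = 0$, the elliptic estimate on $\omega$ gives $\|\hat v(\cdot,\xi)\|_{H^{m}(\omega)} \leq C\|\hat\varphi(\cdot,\xi)\|_{H^{m-2}(\omega)}$ with a constant independent of $\xi$ (the zeroth-order term $\xi^2 \geq 0$ only helps), one multiplies by $\langle \xi \rangle^{2(\ell+1) - m}$ appropriately, and integrates in $\xi$ using Plancherel to recover the $H^{2(\ell+1)}(\Omega)$ norm. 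Either route works; the Fourier route makes the $x_n$-uniformity of the constant transparent and is the one I would write up.
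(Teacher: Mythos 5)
Your overall strategy -- partial Fourier transform in $x_n$, reduction to a $\xi$-dependent Dirichlet problem on the cross-section $\omega$, and reassembly via Plancherel and interpolation -- is exactly the paper's, and your treatment of the case $\ell=0$ is correct. However, the quantitative claim on which your preferred write-up rests, namely that the elliptic estimate $\|\widehat{v}(\cdot,\xi)\|_{H^{m}(\omega)}\leq C\|\widehat{\varphi}(\cdot,\xi)\|_{H^{m-2}(\omega)}$ holds with $C$ independent of $\xi$ because ``the zeroth-order term $\xi^{2}\geq 0$ only helps'', is false as soon as $m\geq 4$. The term $\xi^{2}$ is a large parameter, not a harmless nonnegative potential: bootstrapping the bounded-domain estimate only gives $\|\widehat{v}\|_{H^{4}(\omega)}\leq C\bigl(\|\widehat{\varphi}\|_{H^{2}(\omega)}+\xi^{2}\|\widehat{v}\|_{H^{2}(\omega)}\bigr)$, and the factor $\xi^{2}$ cannot be absorbed. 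Concretely, take $\omega=(0,\pi)$ and $\widehat{\varphi}(\cdot,\xi)\equiv 1$: the solution of $-w''+\xi^{2}w=1$, $w(0)=w(\pi)=0$, develops boundary layers of width $\xi^{-1}$, and since $w^{(4)}=\xi^{4}w-\xi^{2}\approx-\xi^{2}e^{-\xi x'}$ near $x'=0$, one finds $\|w\|_{H^{4}(0,\pi)}\sim\xi^{3/2}$ while $\|1\|_{H^{2}(0,\pi)}$ is constant; no $\xi$-uniform constant exists. The correct estimate, which the paper establishes by induction on $\ell$ in \eqref{re9}, is $\|\widehat{v}(\cdot,\xi)\|_{2(\ell+1),\omega}\leq C\sum_{j=0}^{\ell}\xi^{2j}\|\widehat{\varphi}(\cdot,\xi)\|_{2(\ell-j),\omega}$: the unavoidable powers $\xi^{2j}$ are traded against axial derivatives of $\varphi$, using $\|\xi^{2j}\widehat{\varphi}\|_{L^{2}(\R_\xi;H^{2(\ell-j)}(\omega))}\leq\|\varphi\|_{H^{2j}(\R;H^{2(\ell-j)}(\omega))}\leq\|\varphi\|_{2\ell,\Omega}$. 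Without this correction the integration in $\xi$ fails for every $\ell\geq 1$, so this is a genuine gap rather than a bookkeeping issue.

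Your first (real-space) route has a related circularity: the downward induction is meant to start at $j=2\ell$ with the input $v_{2\ell+2}=\partial_{x_n}^{2\ell+2}v\in L^{2}(\Omega)$, but that is precisely one of the top-order derivatives you are trying to establish. The paper resolves this by deriving $v\in H^{2(\ell+1)}(\R;L^{2}(\omega))$ separately from the equation (multiplying \eqref{re4} by $\xi^{2\ell}$ and using the already-proved $L^{2}\to H^{2}$ bound), and only then combining it with $v\in L^{2}(\R;H^{2(\ell+1)}(\omega))$ through the Lions--Magenes intermediate-derivatives theorem. If you reorganize your argument as an upward induction on $\ell$ and replace the $\xi$-uniform claim by the $\xi^{2j}$-weighted estimate above, your proof becomes correct and essentially coincides with the paper's.
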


\begin{proof}
The proof is by induction on $\ell \in \N$ and relies essentially on the following decomposition of the Dirichlet Laplacian $A_0$
\bel{re2}
\F A_0 \F^{-1} = \int_{\R}^{\oplus} \widehat{A}_{0,p} dp.
\end{equation}
Here $\F$ denotes the partial Fourier transform with respect to $x_n$, i.e.
\[
(\F w)(x',p) = \widehat{w}(x',p)=\frac{1}{(2 \pi)^{1 \slash 2}} \int_{\R} e^{-i p x_n} w(x',x_n) dx_n,\ (x',p) \in \Omega, 
\]
and 
\bel{re2a} 
\widehat{A}_{0,p}:=-\Delta_{x'}+p^2,\ p \in \R,
\ee
is the selfadjoint operator in $L^2(\omega)$ generated by the closed quadratic form 
$$\widehat{a}_{0,p}[w]:=\int_{\omega} (|\nabla_{x'} w(x')|^2 + p^2 |w(x')|^2 )dx',\ w \in \Dom(\widehat{a}_{0,p}):=H_0^1(\omega). $$
Since $\omega$ is a bounded domain with boundary no less than $C^2$, we have 
\bel{re2b}
\Dom(\widehat{A}_{0,p})=H_0^1(\omega) \cap H^2(\omega),\ p \in \R,
\ee
from \cite{Ag}. For further reference we notice for all $p \in \R$ that 
$$  \langle \widehat{A}_{0,p} w , w \rangle_{0,\omega} = \widehat{a}_{0,p}[w] \geq (c_0(\omega) + p^2) \| w \|_{0,\omega},\ w \in H_0^1(\Omega) \cap H^2(\Omega), $$
where $c_0(\omega)>0$ is the constant appearing in the Poincar\'e inequality associated to the bounded domain $\omega$. As a consequence the operator $\widehat{A}_{0,p}$ is boundedly invertible in $L^2(\omega)$ and it holds true that
\bel{re3}
\| \widehat{A}_{0,p}^{-1} \|_{\B(L^2(\omega))} \leq (c_0(\omega)+p^2)^{-1},\ p \in \R. 
\ee
In light of \eqref{dom0} and \eqref{re2b} we see that $v \in H_0^1(\Omega) \cap H^2(\Omega)$
is solution to \eqref{re0} iff $\widehat{v}(p) \in H_0^1(\omega) \cap H^2(\omega)$ verifies 
\bel{re4}
\left\{ 
\begin{array}{rlcl} 
\widehat{A}_{0,p} \widehat{v}(p) & = & \widehat{\varphi}(p) & \textrm{in}\ \omega \\ \widehat{v}(p) &= & 0 & \textrm{on}\ \pd \omega,
\end{array} \right.
\ee
for a.e. $p \in \R$. \\
a) Having said that we first examine the case $\ell=0$. We pick $\varphi \in L^2(\Omega)$ and assume that $\partial \omega$ is $C^2$. Since $\widehat{\varphi}(p) \in L^2(\omega)$ for a.e. $p \in \R$ we deduce from \cite{Ev}[\S 6.3, Theorem 5] that there is a unique solution $\widehat{v}(p) \in H^2(\omega)$ to \eqref{re4}. Taking into account that
$$ - \Delta_{x'} \widehat{v}(p) = g(p)\ \mbox{with}\ g(p):=\widehat{\varphi}(p)  -p^2 \widehat{v}(p) \in L^2(\omega),\ p \in \R, $$
we get (see e.g. the remark following \cite{Ev}[\S 6.3,  Theorem 5]):
\bel{re5}
\| \widehat{v}(p) \|_{2,\omega} \leq C(\omega) \| g(p) \|_{0,\omega} \leq C(\omega) ( \| \widehat{\varphi}(p) \|_{0,\omega} + p^2 \| \widehat{v}(p) \|_{0,\omega} ),\ p \in \R.
\ee
Here and henceforth $C(\omega)$ denotes some generic positive constant depending only on $\omega$.
On the other hand, since $\widehat{v}(p)=\widehat{A}_{0,p}^{-1} \widehat{\varphi}(p)$ by \eqref{re4}, we have
$$ \| \widehat{v}(p) \|_{0,\omega} \leq ( c_0(\omega) + p^2 )^{-1} \| \widehat{\varphi}(p) \|_{0,\omega},\ p \in \R, $$
according to \eqref{re3}. From this and \eqref{re5} then follows that
\bel{re6}
\| \widehat{v}(p) \|_{2,\omega} \leq C(\omega) \| \widehat{\varphi}(p) \|_{0,\omega},\ p \in \R.
\ee
Since $p \mapsto \widehat{\varphi}(p) \in L^2(\R;L^2(\omega))$ as $\varphi \in L^2(\Omega)$, then we have $v = \F^{-1} \widehat{v} \in L^2(\R;H^2(\omega))$ and
\bel{re7}
\| v \|_{L^2(\R;H^2(\omega))} \leq C(\omega) \| \varphi \|_{0,\Omega},
\ee
by \eqref{re6}. Further, taking into account that
$$ \| \widehat{A}_{0,p} \widehat{v}(p) \|_{0,\omega}^2 = \| \Delta_{x'} \widehat{v}(p) \|_{0,\omega}^2 + 2 p^2 \| \nabla_{x'} \widehat{v}(p) \|_{0,\omega}^2 + p^4 \| \widehat{v}(p) \|_{0,\omega}^2 = \| \widehat{\varphi}(p) \|_{0,\omega}^2,\ p \in \R, $$
from \eqref{re2a} and \eqref{re4}, we get that $\int_{\R} p^4 \| \widehat{v}(p) \|_{0,\omega}^2 dp \leq \| \varphi \|_{0,\Omega}^2$. This entails that $v \in H^2(\R;L^2(\omega))$ obeys
\bel{re8}
\| v \|_{H^2(\R;L^2(\omega))} \leq \| \varphi \|_{0,\Omega}.
\ee
Applying \cite{LM1}[Chap. 1, Theorem 7.4] to $v \in L^2(\R;H^2(\omega)) \cap H^2(\R;L^2(\omega))$ we find that $v \in H^2(\Omega)$. Moreover, the norm in $L^2(\R;H^2(\omega)) \cap H^2(\R;L^2(\omega))$ being equivalent to the usual one in $H^2(\Omega)$, \eqref{re7}-\eqref{re8} yields
$$ \| v \|_{2,\Omega} \leq C(\omega) \| \varphi \|_{0,\Omega}. $$
b) For $\ell \geq 1$ fixed, assume that $\partial{\omega}$ is $C^{2(\ell +1)}$ and let $\varphi \in H^{2 \ell}(\Omega)$. We suppose in addition that the claim obtained by substituting $k \in \N_{\ell-1}$ for $\ell$ in Lemma \ref{lm-re}, and the following estimate
\bel{re9}
\| \widehat{v}(p) \|_{2(k+1),\omega} \leq C(\omega) \sum_{j=0}^{k} p^{2j} \| \widehat{\varphi}(p) \|_{2(k-j),\omega},\ p \in \R,
\ee
both hold true. Notice that \eqref{re9} actually reduces to \eqref{re6} in the particular case where $\ell-1=0$.
Taking into account that $\widehat{\varphi}(p) \in H^{2 \ell}(\omega)$ for a.e. $p \in \R$ and $\partial{\omega}$ is $C^{2(\ell +1)}$, the solution $\widehat{v}(p)$ to
\eqref{re4} is uniquely defined in $H^{2(\ell+1)}(\omega)$ by  \cite{Ev}[\S 6.3, Theorem 5], and it satisfies:
\bel{re10}
\| \widehat{v}(p)\|_{2(\ell + 1),\omega} \leq C(\omega) ( \| \widehat{\varphi}(p) \|_{2 \ell, \omega} + p^2 \| \widehat{v}(p)\|_{2 \ell,\omega} ).
\ee
Moreover, due to \eqref{re9}, the estimate $p^2 \| \widehat{v}(p)\|_{2 \ell,\omega} \leq C(\omega) \sum_{j=1}^{\ell} p^{2j} \| \widehat{\varphi}(p) \|_{2(\ell-j),\omega}$ holds for a.e. $p \in \R$, hence
\bel{re11}
\| \widehat{v}(p)\|_{2(\ell + 1),\omega} \leq C(\omega) \sum_{j=0}^{\ell} p^{2j} \| \widehat{\varphi}(p) \|_{2(\ell-j),\omega},\ p \in \R,
\ee
from \eqref{re10}.
As a consequence we have $v \in L^2(\R;H^{2(\ell +1)}(\omega))$ and the following estimate:
\bel{re12}
\| v \|_{L^2(\R;H^{2(\ell +1)}(\omega))} \leq C(\omega) \sum_{j=0}^\ell \| \varphi \|_{H^{2j}(\R;H^{2(\ell-j)}(\omega))} \leq C(\omega) \| \varphi \|_{2 \ell,\Omega}.
\ee
Further, multiplying \eqref{re4} by $p^{2 \ell}$ yields
$p^{2(\ell +1)} \widehat{v}(p) = p^{2 \ell} ( \widehat{\varphi}(p) + \Delta_{x'}\widehat{v}(p))$ for a.e. $p \in \R$, so we get
$$ p^{2(\ell + 1)} \| \widehat{v}(p) \|_{0,\omega} \leq p^{2 \ell} ( \| \widehat{\varphi}(p) \|_{0,\omega} + \| \widehat{v}(p) \|_{2,\omega} ) \leq C(w) p^{2 \ell} \| \widehat{\varphi}(p) \|_{0,\omega},\ p \in \R, $$
by applying \eqref{re9} with $k=0$. From this then follows that $v \in H^{2(\ell+1)}(\R;L^2(\omega))$, with
\bel{re13}
\| v \|_{H^{2(\ell+1)}(\R;L^2(\omega))} \leq C(\omega) \| \varphi \|_{H^{2\ell}(\R;L^2(\omega))}\leq C(\omega) \| \varphi \|_{H^{2\ell}(\Omega)}.
\ee
Summing up, we have $v \in L^2(\R;H^{2(\ell+1)}(\omega)) \cap H^{2(\ell+1)}(\R;L^2(\omega))$. Interpolating with \cite{LM1}[Chap. 1, Theorem 7.4], we thus find out that $v \in H^{2(\ell+1)}(\Omega)$. Finally,
\eqref{re1} follows from \eqref{re12}-\eqref{re13} and the equivalence of norms in $L^2(\R;H^{2(\ell+1)}(\omega)) \cap H^{2(\ell+1)}(\R;L^2(\omega))$ and $H^{2(\ell+1)}(\Omega)$. This and \eqref{re11} proves the assertion of the lemma.
\end{proof}

\subsubsection{More on $\Dom(A_q^k)$, $k \in \N^*$}

Armed with Lemma \ref{lm-re} we are now in position to prove the following:
\begin{proposition}
\label{pr-dom}
Let $k \in \N^*$, $q \in W^{2(k-1),\infty}(\Omega)$, and assume that $\partial \omega$ is $C^{2(k+1)}$. Then we have
\bel{dom1}
\Dom(A_q^k) = \{ v \in H^{2k}(\Omega),\ A_q^j v \in H_0^1(\Omega)\ \mbox{for\ all}\ j \in \N_{k-1} \}.
\ee
Moreover $\| \cdot \|_{\Dom(A_q^k)}$ is equivalent to the usual norm in $H^{2k}(\Omega)$. Namely, if $\| q \|_{W^{2(k-1),\infty}(\Omega)} \leq M$ for some $M>0$, then we may find a constant $c_k=c_k(M,\omega)>0$, such that we have
\bel{dom2}
\frac{1}{c_k} \| v \|_{\Dom(A_q^k)} \leq  \| v \|_{2k,\Omega} \leq c_k \| v \|_{\Dom(A_q^k)},\ v \in \Dom(A_q^k).
\ee
\end{proposition}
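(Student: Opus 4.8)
The plan is to argue by induction on $k \in \N^*$, the key analytic inputs being the elliptic regularity estimate of Lemma \ref{lm-re} together with the elementary Leibniz bound: multiplication by $q \in W^{m,\infty}(\Omega)$ maps $H^m(\Omega)$ into itself with $\| q w \|_{m,\Omega} \leq C(m) \| q \|_{W^{m,\infty}(\Omega)} \| w \|_{m,\Omega}$. For the base case $k=1$ one has $\Dom(A_q)=\Dom(A_0)=H_0^1(\Omega) \cap H^2(\Omega)$ by definition, and since $\N_0=\{0\}$ and $A_q^0=I$ the right-hand side of \eqref{dom1} is exactly $\{ v \in H^2(\Omega):\ v \in H_0^1(\Omega)\}$, so \eqref{dom1} is immediate. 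For \eqref{dom2}, one inequality follows from $\| A_q v \|_{0,\Omega} \leq \| \Delta v \|_{0,\Omega} + M \| v \|_{0,\Omega} \leq (1+M) \| v \|_{2,\Omega}$; the reverse one from writing $-\Delta v = A_q v - q v \in L^2(\Omega)$ with $v \in H_0^1(\Omega)$ and invoking Lemma \ref{lm-re} with $\ell=0$, giving $\| v \|_{2,\Omega} \leq C(\omega) \| A_q v - q v \|_{0,\Omega} \leq C(\omega)(1+M) \| v \|_{\Dom(A_q)}$.

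For the induction step, fix $k \geq 2$ and assume the statement for $k-1$; this is legitimate since $q \in W^{2(k-1),\infty}(\Omega)\subset W^{2(k-2),\infty}(\Omega)$ and $\partial \omega$ of class $C^{2(k+1)}$ is a fortiori $C^{2k}$. To prove $\Dom(A_q^k) \subseteq \{\cdots\}$: if $v \in \Dom(A_q^k)$ then $v \in \Dom(A_q^{k-1})$ and $A_q v \in \Dom(A_q^{k-1})$, so the induction hypothesis yields $v, A_q v \in H^{2(k-1)}(\Omega)$, $A_q^j v \in H_0^1(\Omega)$ for $j \in \N_{k-2}$, and $A_q^{j+1}v=A_q^j(A_q v) \in H_0^1(\Omega)$ for $j \in \N_{k-2}$, i.e. $A_q^j v \in H_0^1(\Omega)$ for all $j \in \N_{k-1}$. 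Now $-\Delta v = A_q v - q v$ with $A_q v \in H^{2(k-1)}(\Omega)$ and $q v \in H^{2(k-1)}(\Omega)$ (Leibniz), and $v \in H_0^1(\Omega)$; Lemma \ref{lm-re} with $\ell=k-1$ (applicable since $\partial \omega$ is $C^{2k}$) then gives $v \in H^{2k}(\Omega)$ together with $\| v \|_{2k,\Omega} \leq C(\omega,k)(\| A_q v \|_{2(k-1),\Omega} + \| q v \|_{2(k-1),\Omega})$. Applying the induction hypothesis to $A_q v$, noting $\| A_q v \|_{\Dom(A_q^{k-1})}^2=\sum_{j=0}^{k-1}\|A_q^{j+1}v\|_{0,\Omega}^2 \leq \| v \|_{\Dom(A_q^k)}^2$, and bounding $\| q v \|_{2(k-1),\Omega} \leq C(k) M \| v \|_{2(k-1),\Omega}$ with the induction hypothesis on $v$, one reaches $\| v \|_{2k,\Omega} \leq c_k(M,\omega) \| v \|_{\Dom(A_q^k)}$, half of \eqref{dom2}.

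For the reverse inclusion, let $v \in H^{2k}(\Omega)$ with $A_q^j v \in H_0^1(\Omega)$ for all $j \in \N_{k-1}$; here $A_q^j v \in H^{2(k-j)}(\Omega) \subset L^2(\Omega)$ for $0 \leq j \leq k$ is well defined, each application of $A_q=-\Delta+q$ lowering the Sobolev index by $2$ (using $q \in W^{2(k-1),\infty}(\Omega)$). By the induction hypothesis $v \in \Dom(A_q^{k-1})$ (it lies in $H^{2(k-1)}(\Omega)$ and carries the boundary conditions for $j \in \N_{k-2}$) and $A_q v \in \Dom(A_q^{k-1})$ (it lies in $H^{2(k-1)}(\Omega)$ and $A_q^j(A_q v)=A_q^{j+1}v \in H_0^1(\Omega)$ for $j \in \N_{k-2}$); hence $v \in \Dom(A_q^k)$. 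Finally $\| v \|_{\Dom(A_q^k)}^2=\sum_{j=0}^k \| A_q^j v \|_{0,\Omega}^2$ with each summand bounded by $C(M,\omega,k) \| v \|_{2k,\Omega}$, since $A_q^j$ is a differential operator of order $2j \leq 2k$ whose coefficients are polynomial in $q$ and its derivatives up to order $2(j-1) \leq 2(k-1)$, hence controlled by a function of $M$; this yields the remaining inequality in \eqref{dom2}.

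The argument is a routine elliptic bootstrap, so the only delicate points are bookkeeping ones: (i) one must check at every step that the smoothness imposed on $\partial \omega$ and the regularity imposed on $q$ are precisely what Lemma \ref{lm-re} and the multiplication estimate require — and the indices do match exactly — and (ii) one must verify that all constants generated collapse into a single $c_k$ depending only on $M$, $\omega$ (and the fixed $k$), which holds because the Leibniz constants depend only on $n$ and $k$ while $\| q \|_{W^{2(k-1),\infty}(\Omega)} \leq M$ dominates every coefficient norm occurring in $A_q^j$ for $j \leq k$. The unboundedness of $\Omega$ causes no difficulty here: the multiplication inequalities are local (and may, if desired, be read off the partial-Fourier-in-$x_n$ decomposition used to prove Lemma \ref{lm-re}).
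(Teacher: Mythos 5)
Your proof is correct and follows essentially the same route as the paper's: induction on $k$, writing $-\Delta v = A_q v - qv$ and invoking the elliptic regularity Lemma \ref{lm-re} with $\ell=k-1$ for the forward inclusion and the norm bound, and the same two-step descent to $\Dom(A_q^{k-1})$ for the reverse inclusion. You merely spell out a few details the paper leaves implicit (the Leibniz bound on $\|qv\|_{2(k-1),\Omega}$, the identity $\|A_q v\|_{\Dom(A_q^{k-1})}\leq\|v\|_{\Dom(A_q^k)}$, and the ``straightforward'' side of \eqref{dom2}), all of which check out.
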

\begin{proof}
We proceed by induction on $k \in \N^*$.\\
a) Case $k=1$. Recalling that $\Dom(A_q)=H_0^1(\Omega) \cap H^2(\Omega)$, it is enough to prove the right inequality in \eqref{dom2}.  This can be done by noticing that $v \in \Dom(A_q)$ is solution to
\eqref{re0} with $\varphi = A_q v - q v \in \HH$, which entails
$$\| \varphi \|_{0, \Omega} \leq M \| v \|_{0,\Omega} + \| A_q v \|_{0,\Omega} \leq \max(1,M) \| v \|_{\Dom(A_q)}, $$
and then applying \eqref{re1} with $\ell=0$.\\
b) Let $k \geq 2$ be fixed and suppose that \eqref{dom1}-\eqref{dom2} is valid for all $j \in \N_{k-1}$. Assume moreover that $q \in W^{2(k-1),\infty}(\Omega)$ and $\partial \omega$ is $C^{2(k+1)}$. Pick $v \in \Dom(A_q^{k})$ and put $\varphi= A_q v - q v$. Both $v$ and $A_q v$ are in $\Dom(A_q^{k-1})$, which is embedded in $H^{2(k-1)}(\Omega)$ by induction assumption, so we have $\varphi \in H^{2(k-1)}(\Omega)$. Therefore, $v$ being solution to \eqref{re0}, it holds true that $v \in H^{2k}(\Omega)$, from Lemma \ref{lm-re}. Moreover the r.h.s. of \eqref{dom2}, and hence both sides, since the l.h.s. is completely straightforward, follows by applying \eqref{re1} with $\ell=k-1$. Further, the induction hypothesis combined with the fact that $v \in \Dom(A_q^{k-1})$ (resp., $A_q v \in \Dom(A_q^{k-1})$ for all $j \in \N_{k-1}^*$) yields $v \in H_0^1(\Omega)$ (resp., $A_q^j v \in H_0^1(\Omega)$ for all $j \in \N_{k-1}^*$). Summing up, we have
\bel{dom3}
\Dom(A_q^{k}) \subset \{ v \in H^{2k}(\Omega),\ A_q^j v \in H_0^1(\Omega)\ \mbox{for\ all}\ j \in \N_{k-1} \}.
\ee
On the other hand, each $v \in H^{2k}(\Omega)$ such that $A_q^j v \in H_0^1(\Omega)$ for all $j \in \N_{k-1}$, fulfills
$$
A_q^m v \in H^{2(k-1)}(\Omega)\ \mbox{and}\ A_q^j (A_q^m v) \in H_0^1(\Omega),\ j \in \N_{k-2},\ m=0,1,
$$
so we get that $v \in \Dom (A_q^{k-1})$ (resp., $A_q v \in \Dom (A_q^{k-1})$), by applying the induction assumption to $A_q^m v$ with $m=0$ (resp., $m=1$). As a consequence we have 
$v \in \Dom(A_q^{k})$, showing that the inclusion relation may be reversed in \eqref{dom3}, which proves the desired result.
%$$\{ v \in H^{2 k}(\Omega),\ A_q^j v \in H_0^1(\Omega)\ \mbox{for\ all}\ j \in \N_{k-1} \} \subset \Dom(A_q^k).$$
\end{proof}

This immediately entails the:
\begin{follow}
\label{cor-dom}
For $k \geq 2$, assume that $\partial \omega$ is $C^{2(k+1)}$ and let $q_0, q \in W^{2(k-1),\infty}(\Omega) \cap C^{2(k-2)}(\overline{\Omega})$ fulfill
\eqref{co2}.
%$$ \partial_x^\alpha p = \partial_x^\alpha q\ \mbox{on}\ \partial \Omega,\ \mbox{for\ all}\ \alpha:=(\alpha_j)_{j=1}^n \in \N^n\ \mbox{such\ that}\ | \alpha |:=\sum_{j=1}^n \alpha_j \leq 2(k-2). $$
Then we have $\Dom(A_q^k) = \Dom(A_{q_0}^k)$.
\end{follow}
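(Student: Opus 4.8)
The plan is to use the characterization of $\Dom(A_q^k)$ provided by Proposition \ref{pr-dom}, which describes this domain as
$$ \Dom(A_q^k) = \{ v \in H^{2k}(\Omega),\ A_q^j v \in H_0^1(\Omega)\ \mbox{for\ all}\ j \in \N_{k-1} \}, $$
and likewise for $q_0$. Since the underlying Sobolev space $H^{2k}(\Omega)$ is the same in both descriptions, it suffices to prove, for every $v \in H^{2k}(\Omega)$, the equivalence
$$ \left( A_q^j v \in H_0^1(\Omega)\ \mbox{for\ all}\ j \in \N_{k-1} \right) \Longleftrightarrow \left( A_{q_0}^j v \in H_0^1(\Omega)\ \mbox{for\ all}\ j \in \N_{k-1} \right). $$
By symmetry in the roles of $q$ and $q_0$, only one implication needs to be established.

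The first key step is to observe that, under condition \eqref{co2}, the difference $r := q - q_0$ belongs to $W^{2(k-1),\infty}(\Omega) \cap C^{2(k-2)}(\overline{\Omega})$ and satisfies $\partial_x^m r = 0$ on $\partial \Omega$ for every multi-index $m$ with $|m| \leq 2(k-2)$; in particular $r$ together with all its derivatives up to order $2(k-2)$ vanishes on $\partial\Omega$. The second step is the pointwise identity $A_q = A_{q_0} + r$ on $\Dom(A_0)$, which after iterating gives $A_q^j = \sum_{\alpha} c_\alpha(r)\, A_{q_0}^{i_1} r\, A_{q_0}^{i_2} \cdots$, a finite sum of compositions of powers of $A_{q_0}$ interleaved with multiplication operators by $r$. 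The point is that each such composition, applied to $v \in H^{2k}(\Omega)$, produces a function in $H^2(\Omega)$ whose trace on $\partial\Omega$ can be controlled: multiplication by $r$ (which vanishes to sufficiently high order on $\partial\Omega$) maps the relevant Sobolev traces to zero, so that if $A_{q_0}^j v \in H_0^1(\Omega)$ for all $j \in \N_{k-1}$ then the same holds for $A_q^j v$, and vice versa.

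Concretely, I would argue by induction on $j \in \N_{k-1}$ that $A_q^j v \in H^{2(k-j)}(\Omega)$ with $A_q^j v - A_{q_0}^j v \in H_0^1(\Omega) \cap H^{2(k-j)}(\Omega)$, whenever $v \in H^{2k}(\Omega)$ with $A_{q_0}^i v \in H_0^1(\Omega)$ for all $i \in \N_{k-1}$. For $j=0$ this is trivial. For the inductive step, write
$$ A_q^{j+1} v - A_{q_0}^{j+1} v = A_q\!\left( A_q^j v - A_{q_0}^j v \right) + \left( A_q - A_{q_0} \right) A_{q_0}^j v = A_q\!\left( A_q^j v - A_{q_0}^j v \right) + r\, A_{q_0}^j v. $$
The first term lies in $H_0^1(\Omega)$ by the induction hypothesis together with the characterization of $\Dom(A_q)$ (applying $A_q$ to an element of $H_0^1 \cap H^2$ that is in the appropriate $\Dom(A_q^\cdot)$ stays in $H_0^1$, by Proposition \ref{pr-dom}); the second term is the product of $r$, which vanishes on $\partial\Omega$ together with all derivatives up to order $2(k-2) \geq 2(k-j-1) - 1$ for $j \leq k-2$, with $A_{q_0}^j v \in H^{2(k-j)}(\Omega) \subset H_0^1(\Omega)$, hence lies in $H_0^1(\Omega)$ as well. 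This closes the induction and yields the claimed equality of domains.

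The main obstacle I anticipate is the bookkeeping in the inductive step: one must verify that the vanishing order $2(k-2)$ of $r$ and its derivatives on $\partial\Omega$, guaranteed by \eqref{co2}, is exactly enough to keep every term $r\, A_{q_0}^j v$ (for $j$ up to $k-2$) inside $H_0^1(\Omega)$, and that the regularity $q, q_0 \in W^{2(k-1),\infty} \cap C^{2(k-2)}$ suffices for all the products and compositions appearing to be well defined at the stated Sobolev level. Once the index arithmetic is checked — using that $r \in C^{2(k-2)}(\overline{\Omega})$ multiplies $H^{2(k-j)}(\Omega)$ into itself for $j \geq 0$ and that a function vanishing to order $2(k-2)$ on $\partial\Omega$ times an $H^{2(k-j)}$ function is again in $H_0^1$ — the argument is a routine induction, and the symmetry between $q$ and $q_0$ delivers both inclusions.
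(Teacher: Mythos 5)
Your overall strategy is the right one and is essentially what the paper intends: Corollary \ref{cor-dom} is stated there as an immediate consequence of Proposition \ref{pr-dom} (no separate proof is written out), and the substance is precisely your observation that $r:=q-q_0$ vanishes on $\partial\Omega$ together with all its derivatives up to order $2(k-2)$, so that $A_q^jv$ and $A_{q_0}^jv$ have the same trace for every $j\in\N_{k-1}$ and the two characterizations \eqref{dom1} coincide. However, your inductive step does not close as written. The induction hypothesis is only that $w_j:=A_q^jv-A_{q_0}^jv\in H_0^1(\Omega)\cap H^{2(k-j)}(\Omega)$, and from this you cannot conclude that the first term $A_qw_j$ of $w_{j+1}$ lies in $H_0^1(\Omega)$: applying $-\Delta+q$ to a function in $H_0^1(\Omega)\cap H^2(\Omega)$ does not in general produce a function vanishing on the boundary. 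The parenthetical appeal to ``$w_j$ being in the appropriate $\Dom(A_q^\cdot)$'' is circular, since by Proposition \ref{pr-dom} membership of $w_j$ in $\Dom(A_q^{m})$ with $m\geq 2$ already requires $A_qw_j\in H_0^1(\Omega)$, which is exactly what is to be proved. (There is also a small slip: $H^{2(k-j)}(\Omega)\not\subset H_0^1(\Omega)$; for the second term what you actually need is only that $A_{q_0}^jv\in H^1(\Omega)$ and that $r$ is Lipschitz with $r=0$ on $\partial\Omega$, so that $rA_{q_0}^jv$ has zero trace.)

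The gap is fixable, and both fixes use the full strength of \eqref{co2} rather than just $r|_{\partial\Omega}=0$. Either strengthen the induction hypothesis to $w_j\in\Dom(A_q^{k-j})\cap\Dom(A_{q_0}^{k-j})$ --- then $A_qw_j\in\Dom(A_q^{k-j-1})\subset H_0^1(\Omega)$ for $j\leq k-2$, and one checks that $rA_{q_0}^jv\in\Dom(A_q^{k-j-1})$ because each $A_q^i(rA_{q_0}^jv)$ with $i\leq k-j-2$ only involves derivatives of $r$ of order at most $2i\leq 2(k-2)$, all of which vanish on $\partial\Omega$ --- or dispense with the induction altogether: expanding $(A_{q_0}+r)^j-A_{q_0}^j$ for $j\leq k-1$, every resulting term is a finite sum of products in which some derivative $\partial^\alpha r$ with $|\alpha|\leq 2(j-1)\leq 2(k-2)$ appears as a factor, hence each term lies in $H^1(\Omega)$ with zero trace; combined with $A_{q_0}^jv\in H_0^1(\Omega)$ this yields $A_q^jv\in H_0^1(\Omega)$ for all $j\in\N_{k-1}$, and the symmetry between $q$ and $q_0$ gives the equality of domains.
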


\subsection{Proof of Theorem \ref{thm-reg}}
Evidently, $u$ is solution to \eqref{eq1}-\eqref{co0} iff $v:=u-G_0$ is solution to the following boundary value problem
\bel{reg1}
\left\{ \begin{array}{rcll} -i v'-\Delta v + q v & = & f  & {\rm in}\ Q\\ v(0,x) &= & 0, & x \in \Omega\\
v(t,x)& = & 0, & (t,x) \in \Sigma, \end{array} \right.
\ee 
where
\bel{reg2}
f:=i G_0' + (\Delta-q) G_0 = it (-\Delta + q_0)^2 u_0 - (q-q_0) G_0,
\ee
and $G_0$ is the function defined by \eqref{co0}. We first prove that
\bel{reg3}
f \in C^{\infty}([0,T];\Dom(A_q^{k-1})),
\ee
and
\bel{reg3b}
\| f  \|_{C^1([0,T];\Dom(A_q^{k-1}))} \leq C \| u_0 \|_{2(k+1),\Omega},
\ee
where $C$ denotes some generic positive constant depending only on $T$, $\omega$ and $M$.
To do that we notice from \eqref{co0} that $G_0 \in C^{\infty}([0,T];H^{2k}(\Omega))$, with
\bel{reg3c}
\| G_0 \|_{C^1([0,T];H^{2k}(\Omega))} \leq C \| u_0 \|_{2(k+1),\Omega}.
\ee
As a consequence we have $(q-q_0) G_0 \in C^{\infty}([0,T];H^{2(k-1)}(\Omega))$. Hence Proposition \ref{pr-dom} yields
\bel{reg4}
(q-q_0) G_0 \in C^{\infty}([0,T];\Dom(A_q^{k-1})),
\ee
since every $(-\Delta+q_0)^j (q-q_0) G_0$, $j \in \N_{k-2}$, vanishes on $\partial \Omega$, by \eqref{co2}. Here we used the identity
$\Dom(A_q^{k-1})=\Dom (A_{q_0}^{k-1})$, arising from \eqref{co2} and Corollary \ref{cor-dom}. Similarly, since $(-\Delta + q_0)^2 u_0 \in H^{2(k-1)}(\Omega)$, we deduce from
\eqref{co1} that $(-\Delta + q_0)^2 u_0 \in \Dom(A_q^{k-1})$. This, \eqref{reg2} and \eqref{reg4}, entails \eqref{reg3}, while \eqref{reg3b} follows from \eqref{reg3c} and the basic estimate $\| (-\Delta + q_0)^2 u_0 \|_{2(k-1),\Omega} \leq C \| u_0 \|_{2(k+1),\Omega}$.

Further, refering to \eqref{reg1}, we see that $v$ is solution to \eqref{evo1}. Since $f \in C^{\infty}([0,T];\Dom(A_q^{k-1}))$, by \eqref{reg3}, then $v$ is uniquely defined in $\cap_{j=0}^k C^j([0,T];\Dom(A_q^{k-j}))$ from Proposition \ref{pr-reg}. Moreover, bearing in mind that $f$ is affine in $t$, we get
\bel{reg5}
\sum_{j=0}^k \| v^{(j)} \|_{C^0([0,T];\Dom(A_q^{k-j}))} \leq C \| u_0 \|_{2(k+1),\Omega},
\ee
by \eqref{evo3c} and \eqref{reg3b}.
Now, upon recalling that $G_0$ is an affine function of $t$, the claim of Theorem \ref{thm-reg} follows readily from the identity $u=v+G_0$, \eqref{reg3c}, \eqref{reg5} and Proposition \ref{pr-dom}.

\subsection{A Sobolev embedding theorem in $\Omega$}
\label{sec-SET}
The derivation of Corollary \ref{cor-bounded} from Theorem \ref{thm-reg} boils down to the following result.

\begin{lemma}
\label{lm-SET} 
Let $k \in \N^*$ satisfy $k > n \slash 2$ and assume that $\partial \omega$ is $\mathcal C^k$. Then we have $H^k(\Omega) \subset L^\infty(\Omega)$. Moreover there is a constant $c>0$, depending only on $n$, $k$ and $\omega$, such that the estimate
\bel{sv0}
\| h \|_{L^\infty(\Omega)}\leq c \| h \|_{k,\Omega},
\ee
holds for all $h \in H^k(\Omega)$.
\end{lemma}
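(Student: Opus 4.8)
The plan is to deduce the embedding on the unbounded cylinder $\Omega$ from the classical Sobolev embedding on a \emph{bounded} domain, by cutting $\Omega$ into countably many bounded pieces that are mutual translates along the axis of the cylinder and exploiting that the $H^k$-norm is invariant under those translations. First I would note that $n \geq 2$ together with $k > n \slash 2$ forces $k \geq 2$, so $\partial \omega$ is at least $C^2$ and in particular Lipschitz; hence the bounded cylinder $D_0 := \omega \times (0,1) \subset \R^n$ has Lipschitz boundary. The classical Sobolev embedding theorem for bounded domains with the extension property then yields, since $k > n \slash 2$, the inclusion $H^k(D_0) \subset L^\infty(D_0)$ together with an estimate $\| w \|_{L^\infty(D_0)} \leq c \| w \|_{k,D_0}$ for all $w \in H^k(D_0)$, where $c$ depends only on $n$, $k$ and $\omega$ (see e.g. \cite{Br}[Corollary IX.13] or \cite{Ev}[\S 5.6, Theorem 6]).

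Next, for each $m \in \Z$ I would set $D_m := \omega \times (m,m+1)$, which is the image of $D_0$ under the translation $\tau_m : (x',x_n) \mapsto (x',x_n+m)$. Since $\tau_m$ preserves Lebesgue measure and commutes with every weak derivative, the constant $c$ above is the same for every $D_m$; therefore any $h \in H^k(\Omega)$ restricts to an element of $H^k(D_m)$ with $\| h \|_{L^\infty(D_m)} \leq c \| h \|_{k,D_m} \leq c \| h \|_{k,\Omega}$, the last step using $D_m \subset \Omega$. Finally, since $\bigcup_{m \in \Z} \overline{D_m} = \overline{\Omega}$ and the $D_m$ overlap only on the Lebesgue-null set $\bigcup_{m} (\omega \times \{m\})$, one gets $\| h \|_{L^\infty(\Omega)} = \sup_{m \in \Z} \| h \|_{L^\infty(D_m)} \leq c \| h \|_{k,\Omega}$, which is at once the inclusion $H^k(\Omega) \subset L^\infty(\Omega)$ and the estimate \eqref{sv0}.

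I do not expect a genuine obstacle here; the only point that must be handled with care — and the very reason for slicing rather than invoking an extension or uniform-cone property of $\Omega$ itself — is to ensure the embedding constant does not deteriorate as $|m| \to \infty$, which is exactly what the translation invariance of the $H^k$-norm secures. As an alternative one could construct a bounded extension operator $\widetilde{E} : H^k(\Omega) \to H^k(\R^n)$ out of a Stein-type extension operator $E : H^j(\omega) \to H^j(\R^{n-1})$ acting only on the transverse variable, via $(\widetilde{E}h)(x',x_n) := (E[h(\cdot,x_n)])(x')$ (which is legitimate because $E$ commutes with $\partial_{x_n}$), and then apply the Sobolev embedding theorem in $\R^n$; this route is equally valid but demands somewhat more bookkeeping than the slicing argument above.
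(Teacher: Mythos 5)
Your argument is correct, but it follows a genuinely different route from the paper. The paper implements precisely the alternative you sketch at the end: starting from a bounded extension operator $P \in \B(H^j(\omega);H^j(\R^{n-1}))$, $j\in\N_k$, provided by \cite{LM1}[Chap.\ 1, Theorem 8.1], it defines $(\PP h)(x',x_n):=[P h(\cdot,x_n)](x')$, observes that $\PP\in\B(H^m(\R;H^j(\omega));H^m(\R;H^j(\R^{n-1})))$ for $m+j\le k$ and hence $\PP\in\B(H^k(\Omega);H^k(\R^{n}))$, and concludes by applying the whole-space embedding $H^k(\R^n)\subset L^\infty(\R^n)$ to $\PP h$ together with $(\PP h)_{|\Omega}=h$. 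Your main argument instead slices $\Omega$ into the translates $D_m=\omega\times(m,m+1)$ and uses translation invariance of the $H^k$-norm to keep the embedding constant uniform in $m$; this is more elementary (no global extension operator to construct) and extends verbatim to any domain that is a countable union of congruent bounded Lipschitz pieces. The one point you must handle with care is the embedding $H^k(D_0)\subset L^\infty(D_0)$ itself: $D_0=\omega\times(0,1)$ has edges along $\partial\omega\times\{0,1\}$, so it is not a $C^1$ domain and the references you invoke (\cite{Br}[Corollary IX.13], \cite{Ev}[\S 5.6, Theorem 6]) do not apply literally; you need the version of the Sobolev embedding valid for bounded Lipschitz domains, or for domains with the cone property, which does hold here because a product of bounded Lipschitz domains is Lipschitz (equivalently, inherits the uniform cone condition). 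This is standard but should be cited accordingly; the paper's fiberwise extension sidesteps the issue entirely, since it only ever extends from the smooth cross-section $\omega$ and then works in all of $\R^n$.
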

\begin{proof} 
Since $\omega$ is a bounded domain of $\R^{n-1}$ with $C^k$ boundary, there exists an extension operator 
\bel{sv1}
P \in \B(H^j(\omega); H^j(\R^{n-1})),\ j \in \N_k,
\ee 
such that
\bel{sv2} 
(Pf)_{\vert\omega}=f, f \in L^2(\omega),
\ee
according to \cite{LM1}[Chap. 1, Theorem 8.1]. Next, for all $f \in L^2(\Omega)$, put
$$
\PP f(x',x_n)=[Pf(\cdot,x_n)](x'),\ (x',x_n) \in \Omega.
$$
It is apparent from \eqref{sv1} that $\PP \in \B(H^m(\R;H^j(\omega)); H^m(\R;H^j(\R^{n-1}))$ for all natural numbers $m$ and $j$ such that $m+j\leq k$. As a consequence we have
\bel{sv3}
\PP \in \B(H^k(\Omega); H^k(\R^{n})).
\ee 
Moreover, it follows readily from \eqref{sv2} that
\bel{sv4}
(\PP h)_{\vert\Omega}=h,\ h\in L^2(\Omega).
\ee
Pick $h \in H^k(\Omega)$. Since $\PP h \in H^k(\R^n)$ by \eqref{sv3}, then the Sobolev embedding theorem \cite{Br}[Corollary IX.13] yields $\PP h \in L^\infty(\R^n)$ and the estimate
\bel{sv5}
\| \PP h \|_{L^\infty(\R^n)} \leq C \| \PP h \|_{k,\R^n},
\ee
where the constant $C>0$ is independent of $h$. From this and \eqref{sv4} then follows that $h \in L^\infty(\Omega)$, with
\bel{sv6}
\| h \|_{L^\infty(\Omega)} \leq \| \PP h \|_{L^\infty(\R^n)}.
\ee
Putting \eqref{sv3} and \eqref{sv5}-\eqref{sv6} together we end up getting \eqref{sv0}.
\end{proof}

%%%%%%%%%%%%%%%%%%%%%%%%%%%%%%%%%%%%%%%%%%%%%%%%%%%%%%%%%%%%%%%%%%%%%%%%%%%%%%%%%%%%%%%%%%%%%%%%%%%%%%%%%%%%%%%%%%%%%%%%%%%%%%%%%%%%%%%%%%%%%%%%%%%%%%%%%%%%%%%%%%%%%%%%%%%%%%%%%%%
%%%%%%%%%%%%%%%%%%%%%%%%%%%%%%%%%%%%%%%%%%%%%%%%%%%%%%%%%%%%%%%%%%%%%%%%%%%%%%%%%%%%%%%%%%%%%%%%%%%%%%%%%%%%%%%%%%%%%%%%%%%%%%%%%%%%%%%%%%%%%%%%%%%%%%%%%%%%%%%%%%%%%%%%%%%%%%%%%%%
%%%%%%%%%%%%%%%%%%%%%%%%%%%%%%%%%%%%%%%%%%%%%%%%%%%%%%%%%%%%%%%%%%%%%%%%%%%%%%%%%%%%%%%%%%%%%%%%%%%%%%%%%%%%%%%%%%%%%%%%%%%%%%%%%%%%%%%%%%%%%%%%%%%%%%%%%%%%%%%%%%%%%%%%%%%%%%%%%%%

\section{Stability estimate}
\label{sec-inv}
In this section we prove \eqref{stab-ineq} by adapting the Bukhgeim-Klibanov method introduced in \cite{BK}. It is by means of a Carleman estimate specifically designed for the system under consideration.

\subsection{Linearization and time symmetrization}
\label{sec-linearization}
%Let $q_j \in \mathcal A_\epsilon(q_0)$, $j=1,2$, and denote by $u_j$ the solution to the problem
%\bel{eqb2}
%\left\{
%\begin{array}{rcll}
%-i u_j'-\Delta u_j + q_j u_j & = & 0 & \mbox{in}\ Q \\  u_j(0,x) &= & u_0(x) & x \in \Omega \\ u_j(t,x) &= & g(t,x) & (t,x) \in \Sigma.
%\end{array}
%\right.
%\ee
%where $g$ is defined by \eqref{co0}.

Set $\rho:=q_1-q_2$ so that $u:=u_1-u_2$ is solution to the boundary value problem
\bel{ip1}
\left\{
\begin{array}{rcll}
-i u'-\Delta u + q_1 u & = &  - \rho u_2 & \mbox{in}\ Q\\  u(0,x) & = & 0, & x \in\Omega\\ u(t,x) & = & 0, &(t,x) \in \Sigma.
\end{array}
\right.
\ee
Since $u \in C^2([0,T];H^{2 (\ell -1)}(\Omega)) \cap C^1([0,T];H^{2 \ell}(\Omega))$ by Theorem \ref{thm-reg}, we may differentiate \eqref{ip1} w.r.t. $t$, getting
\bel{ip2}
\left\{
\begin{array}{rcll} 
-i v'-\Delta v + q_1 v &= & -\rho u_2' & \mbox{in}\ Q \\  v(0,x)  &= & -i \rho u_0, & x\in\Omega \\ v(t,x) & = & 0, & (t,x) \in \Sigma,
\end{array}
\right.
\ee
where $v:=u'$. Since $q_j \in \mathcal{A}_\epsilon(q_0)$, $j=1,2$, we have $\rho u_0 \in H_0^1(\Omega) \cap H^2(\Omega)$. Consequently $v \in C^1([0,T]; L^2(\Omega)) \cap C^0([0,T];H_0^1(\Omega) \cap H^2(\Omega))$ by applying Remark \ref{rm-evo} and Proposition \ref{pr-dom} for $k=1$. Further, putting $u_2(-t,x)=\overline{u_2(t,x)}$ for all $(t,x) \in [-T,0) \times \Omega$ and bearing in mind that $u_0$ and $q_j$, $j=1,2$, are real-valued, we deduce from \eqref{ip2} that the function $v$, extended on $[-T,0) \times \Omega$ by setting $v(t,x):=-\overline{v(-t,x)}$, is the $C^1([-T,T];L^2(\Omega)) \cap C^0([-T,T];H_0^1(\Omega) \cap H^2(\Omega))$-solution to the system
\bel{ip3}
\left\{
\begin{array}{rcll} 
-i v'-\Delta v + q_1 v &= & -\rho u_2' & \mbox{in}\ \tilde{Q}:=(-T,T) \times \Omega \\  v(0,x)  & = & -i \rho u_0, & x\in\Omega \\ v(t,x) & = & 0, & (t,x) \in \tilde{\Sigma}:=(-T,T) \times \Gamma.
\end{array}
\right.
\ee
The main tool needed for the derivation of \eqref{stab-ineq} is a global Carleman inequality for the Schr\"odinger equation in \eqref{ip3}. We use the estimate derived in \cite{KPS1}[Proposition 3.3], that is specifically designed for unbounded cylindrical domains of the type of $\Omega$.

\subsection{Global Carleman estimate for the Schr\"odinger equation in $\Omega$}
\label{sec-Carleman}
Given the Schr\"odinger operator acting in $(C_0^{\infty})'(\tilde{Q})$,
\bel{H} 
L := -  i \partial_t - \Delta,
\ee
we introduce a function
$\tilde{\beta} \in C^4(\overline{\omega};\R_+)$ and an open subset $\gamma_*$ of $\pd \omega$, satisfying the following conditions:
\begin{assumption}
\label{funct-beta}
\hfill \break \vspace*{-.5cm}
\begin{enumerate}[(i)]
\item $\exists C_0>0$ such that the estimate $|\nabla_{x'} \tilde{\beta}(x')| \geq C_0$ holds for all $x' \in \omega$;
\item ${\partial}_{\nu} {\tilde{\beta}}(x') := \nabla_{x'} {\tilde{\beta}}(x'). \nu (x') < 0$ for all $x' \in
    \partial \omega \backslash \gamma_*$, where $\nu$ is the outward unit normal vector to $\partial \omega$;
\item $\exists \Lambda_1>0$, $\exists \epsilon>0$  such that we have $\lambda |\nabla_{x'}  \tilde{\beta}(x') \cdot
    \zeta|^2 + D^2 \tilde{\beta} (x',\zeta, \zeta) \geq \epsilon  |\zeta|^2$ for all $\zeta \in  \R^{n-1}$, $x' \in \omega$ and
    $\lambda  > \Lambda_1$, where $D^2 \tilde{\beta}(x'):=\left( \frac{\partial^2 \tilde{\beta}(x')}{\partial x_i \partial x_j} \right)_{1 \leq i,j \leq n-1}$ and $D^2 \tilde{\beta} (x',\zeta, \zeta)$ denotes the $\R^{n-1}$-scalar product of $D^2 \tilde{\beta}(x') \zeta$ with $\zeta$.
\end{enumerate}
\end{assumption}
Notice that there are actual functions $\tilde{\beta}$ verifying Assumption \ref{funct-beta}, such as $\omega \ni x' \mapsto | x'- x_0' |^2$, for an arbitrary $x_0' \in \R^{n-1} \setminus \overline{\omega}$ and a subboundary $\gamma_* \supset \{x' \in \partial \omega,\ (x'-x_0') \cdot \nu'(x') \geq0 \}$.

Next, for all $x=(x',x_n) \in \Omega$, put
\bel{defbeta} 
\beta(x):= \widetilde{\beta}(x')+K,\ {\rm where}\ K:= r \|\tilde{\beta}\|_{\infty}\ {\rm for\
some}\ r>1, 
\ee 
and define the two following weight functions for $\lambda>0$: 
\bel{defphieta}
\varphi(t,x):=\frac{e^{\lambda  \beta(x)}}{(T+t)(T-t)}\ {\rm and}\ \eta(t,x):=\frac{e^{2\lambda K} -
e^{\lambda \beta(x)}}{(T+t)(T-t)},\ (t,x) \in \tilde{Q}. 
\ee 
Finally, for all $s>0$, we introduce the two following
operators acting in $(C_0^{\infty})'(\tilde{Q})$:
\bel{M1} 
M_1 : = i \partial_t +
\Delta  + s^2 |\nabla \eta |^2\ {\rm and}\ M_2: = i s \eta' + 2 s \nabla \eta \cdot \nabla  + s (\Delta \eta). 
\ee
It is apparent that $M_1$ (resp. $M_2$) is the adjoint (resp. skew-adjoint) part of the operator $e^{-s \eta} L e^{s \eta}$,
where $L$ is given by \eqref{H}. 

We may now state the following global Carleman estimate, that is borrowed from \cite{KPS1}[Proposition 3.3].

\begin{proposition}
\label{pr-carleman} 
Let $\tilde{\beta}$ and $\gamma_*$ obey Assumption \ref{funct-beta}, let $\beta$, $\varphi$ and $\eta$ be given by \eqref{defbeta}-\eqref{defphieta}, and let the operators $M_j$, 
$j=1,2$, be defined by \eqref{M1}.
Then there are two constants $s_0>0$ and $C>0$, depending only on $T$, $\omega$ and $\gamma_*$,
such that the estimate
\beas
& & s  \|  {\rm e}^{-s \eta}  \nabla_{x'} w   \|_{0,\tilde{Q}}^2
+s^3  \| e^{-s \eta} w  \|_{0,\tilde{Q}}^2  + \sum_{j=1,2} \|   M_j  e^{-s \eta} w \|_{0,\tilde{Q}}^2 \nonumber  \\
& \leq  & C  \left(  s \|  e^{-s \eta} \varphi^{1/2}  ( \partial_{\nu} \beta)^{1/2}
\partial_{\nu}  w   \|_{0,\tilde{\Sigma}_*}^2+  \|   e^{-s \eta}   L  w  \|_{0,\tilde{Q}}^2 \right),
\eeas
holds for all $s \geq s_0$ and any function $w \in L^2(-T,T;  {\rm H}^1_0( \Omega ) ) $ verifying $ L w \in
L^2(\tilde{Q})$ and $\partial_{\nu} w \in L^2(-T,T;L^2(\Gamma_*))$. Here $\Gamma_*$ (resp., $\tilde{\Sigma}_*$) stands for $\gamma_* \times \R$ (resp., $(-T,T) \times \gamma_* \times \R$).
\end{proposition}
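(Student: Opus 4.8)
Since Proposition~\ref{pr-carleman} is quoted verbatim from \cite{KPS1}[Proposition 3.3], one may simply refer to that paper for the full argument; for completeness, here is the route I would follow. The scheme is the classical one — conjugation by the weight, splitting into self- and skew-adjoint parts, lower bound for a commutator — the only novelty being the way the unbounded variable $x_n$ is handled. Set $v := e^{-s\eta} w$. A direct computation shows that $\| e^{-s\eta} L w \|_{0,\tilde{Q}}^2 = \| M_1 v + M_2 v \|_{0,\tilde{Q}}^2 = \| M_1 v \|_{0,\tilde{Q}}^2 + \| M_2 v \|_{0,\tilde{Q}}^2 + 2 \Pre{ \langle M_1 v, M_2 v \rangle_{0,\tilde{Q}} }$, where $M_1$ (resp. $M_2$) is, modulo boundary terms, self-adjoint (resp. skew-adjoint). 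Writing $2\Pre{ \langle M_1 v, M_2 v\rangle_{0,\tilde{Q}}} = \langle [M_1,M_2] v, v\rangle_{0,\tilde{Q}} + \mathcal{B}$, where $[M_1,M_2]=M_1M_2-M_2M_1$ and $\mathcal{B}$ gathers the boundary contributions (on $\tilde{\Sigma}$, on $\{t=\pm T\}\times\Omega$, and at $x_n=\pm\infty$) generated by the integrations by parts, everything reduces to a lower bound for $\langle [M_1,M_2] v, v\rangle_{0,\tilde{Q}}$ together with control of the sign of $\mathcal{B}$.

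The feature specific to $\Omega=\omega\times\R$ is that $\beta$, and hence $\eta$ and $\varphi$, depend only on the transversal variable $x'$. Consequently $M_2 = i s\eta' + 2 s\nabla_{x'}\eta\cdot\nabla_{x'} + s(\Delta_{x'}\eta)$ contains no $x_n$-derivative and has $x_n$-independent coefficients, whereas the sole $x_n$-dependent term in $M_1$ is $\partial_{x_n}^2$. Hence $[\partial_{x_n}^2,M_2]=0$, and integrating once by parts in $x_n$ gives $\Pre{\langle \partial_{x_n}^2 v, M_2 v\rangle_{0,\tilde{Q}}} = -\Pre{\langle \partial_{x_n} v, M_2 \partial_{x_n} v\rangle_{0,\tilde{Q}}} + (\text{term at } x_n=\pm\infty)$, the first term on the right being $0$ because $M_2$ is skew-adjoint in the $(t,x')$ variables and $\partial_{x_n} v = 0$ on $\tilde{\Sigma}$. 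Therefore, modulo boundary terms, $\langle [M_1,M_2] v, v\rangle$ equals $\langle [M_1',M_2] v, v\rangle$ with $M_1' := i\partial_t + \Delta_{x'} + s^2 |\nabla_{x'}\eta|^2$; since $M_1'$ and $M_2$ involve only $t,x'$-derivatives with $x_n$-independent coefficients, the whole cross term "fibers" over $x_n\in\R$ and reduces, for each fixed $x_n$, to exactly the quantity appearing in the usual global Carleman estimate for the Schr\"odinger operator on the bounded section $\omega$ (see e.g. \cite{BP}).

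It then remains to invoke this bounded-domain computation fiberwise and integrate in $x_n$. Using Assumption~\ref{funct-beta}(i),(iii) — the non-degeneracy $|\nabla_{x'}\tilde{\beta}|\geq C_0$ and the pseudoconvexity inequality — one obtains, after fixing $\lambda$ large enough and then $s\geq s_0$, a pointwise-in-$x_n$ bound of the form $\langle [M_1',M_2] v, v\rangle \geq C( s\| e^{-s\eta}\nabla_{x'} w\|^2 + s^3\| e^{-s\eta} w\|^2 ) - (\text{terms absorbed by enlarging } s) - \mathcal{B}_{\tilde{\Sigma}}$, where $\mathcal{B}_{\tilde{\Sigma}}$ is the boundary integral on $\tilde{\Sigma}=(-T,T)\times\partial\omega\times\R$. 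Since $v=0$ on $\tilde{\Sigma}$, only the normal-derivative part of $\mathcal{B}_{\tilde{\Sigma}}$ survives, of the shape $s\int_{\tilde{\Sigma}} e^{-2s\eta}\varphi\,(\partial_\nu\beta)|\partial_\nu w|^2$; by Assumption~\ref{funct-beta}(ii), $\partial_\nu\beta<0$ on $(\partial\omega\setminus\gamma_*)\times\R$, so that piece has the favorable sign and only the $\gamma_*$-part, namely $s\| e^{-s\eta}\varphi^{1/2}(\partial_\nu\beta)^{1/2}\partial_\nu w\|_{0,\tilde{\Sigma}_*}^2$, is left on the right-hand side; the contributions on $\{t=\pm T\}\times\Omega$ vanish because $e^{-s\eta}$ and all its derivatives tend to $0$ as $t\to\pm T$, since $\eta\to+\infty$ there. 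Adding back $\| M_1 v\|^2 + \| M_2 v\|^2\geq 0$ on the left and integrating in $x_n$ yields the claimed inequality, with constants $s_0$ and $C$ inherited, without degeneration, from the bounded section $\omega$ precisely because the weight is independent of $x_n$.

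The one genuinely non-routine point is the rigorous justification of the integrations by parts in the unbounded variable $x_n$, i.e. the vanishing of the contributions "at $x_n=\pm\infty$" that were absorbed above into $\mathcal{B}$. For $w\in L^2(-T,T;H^1_0(\Omega))$ with $Lw\in L^2(\tilde{Q})$ and $\partial_\nu w\in L^2(-T,T;L^2(\Gamma_*))$, this is dealt with by density: one first establishes the estimate for $w$ with compact support in $x_n$ — for which every boundary term at infinity is literally zero — and then passes to the limit by truncating in $x_n$, checking that the commutators of $L$, $M_1$, $M_2$ with the cutoff produce only terms that vanish in the limit. Apart from this approximation step, and the already-noted fact that the $x_n$-independence of the weight keeps all constants uniform, the argument is the standard, if lengthy, Carleman calculation for the Schr\"odinger equation on a bounded domain.
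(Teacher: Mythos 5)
The paper does not actually prove Proposition \ref{pr-carleman}: it is imported verbatim from \cite{KPS1}[Proposition 3.3], so there is no in-paper argument to compare your write-up against. Your outline follows the same Baudouin--Puel conjugation/commutator scheme used in that reference, and it correctly isolates the two cylinder-specific points: since the weight depends only on $x'$, the $\partial_{x_n}^2$ contribution to the cross term $2\Pre{\langle M_1 v, M_2 v\rangle}$ drops out modulo boundary terms (which is exactly why only $\nabla_{x'}w$, and not the full gradient, is controlled on the left, and why the constants are uniform in $x_n$), and the integrations by parts at $x_n=\pm\infty$ must be justified by truncation and density. As a sketch this is sound; the quantitative heart of the matter --- the explicit lower bound for $\langle [M_1',M_2]v,v\rangle$ obtained from Assumption \ref{funct-beta}(iii) after fixing $\lambda$ large and then $s\geq s_0$, and the absorption of the lower-order terms --- is only asserted rather than carried out, but that is the standard bounded-section computation you correctly point to.
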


\subsection{Proof of Theorem \ref{thm-inv}}
We use the same notations as in \S \ref{sec-linearization} and, for the sake of notational simplicity, we denote the various positive constants appearing in the derivation of Theorem \ref{thm-inv} by $C$. Following \cite{KPS1}[Lemmae 3.3 \& 3.4], we start by establishing the coming technical result with the aid of Proposition \ref{pr-carleman}.
\begin{lemma}
\label{lm-inv}
For all $s>0$, we have the estimate:
$$
\| e^{-s\eta(0,\cdot)} \rho u_0 \|_{0,\Omega}^2 \leq C \left( s^{-3 \slash 2} \| e^{-s\eta(0,\cdot)} \rho u_2' \|_{0,Q}^2 + s^{-1 \slash 2} \| e^{-s\eta(0,\cdot)} \partial_\nu v \|_{0,\Sigma_*}^2 \right).
$$
\end{lemma}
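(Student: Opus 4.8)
The plan is to apply the global Carleman estimate of Proposition \ref{pr-carleman} to the solution $v$ of the symmetrized system \eqref{ip3}, and then use the standard Bukhgeim--Klibanov trick of integrating the modulus-squared of $e^{-s\eta}\partial_t v$ in time against the weight, exploiting that $v(0,x)=-i\rho(x)u_0(x)$. First I would check that $v$ (extended to $\tilde Q$) meets the hypotheses of Proposition \ref{pr-carleman}: indeed $v\in C^0([-T,T];H_0^1(\Omega)\cap H^2(\Omega))$ with $Lv=-q_1v-\rho u_2'\in L^2(\tilde Q)$ and $\partial_\nu v\in L^2(-T,T;L^2(\Gamma_*))$ by the regularity coming from Theorem \ref{thm-reg} and Corollary \ref{cor-bounded}. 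Applying the estimate gives, for $s\ge s_0$,
\begin{equation*}
s\|e^{-s\eta}\nabla_{x'}v\|_{0,\tilde Q}^2+s^3\|e^{-s\eta}v\|_{0,\tilde Q}^2+\sum_{j=1,2}\|M_je^{-s\eta}v\|_{0,\tilde Q}^2\le C\left(s\|e^{-s\eta}\varphi^{1/2}(\partial_\nu\beta)^{1/2}\partial_\nu v\|_{0,\tilde\Sigma_*}^2+\|e^{-s\eta}Lv\|_{0,\tilde Q}^2\right),
\end{equation*}
and since $Lv=-q_1 v-\rho u_2'$ with $q_1\in L^\infty$, the term $\|e^{-s\eta}q_1v\|_{0,\tilde Q}^2\le CM^2\|e^{-s\eta}v\|_{0,\tilde Q}^2$ is absorbed into the left side by taking $s$ large. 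This leaves $s^3\|e^{-s\eta}v\|_{0,\tilde Q}^2+\sum_j\|M_je^{-s\eta}v\|_{0,\tilde Q}^2$ controlled by $Cs\|e^{-s\eta}\varphi^{1/2}(\partial_\nu\beta)^{1/2}\partial_\nu v\|_{0,\tilde\Sigma_*}^2+C\|e^{-s\eta}\rho u_2'\|_{0,\tilde Q}^2$.

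Next I would recover $\|e^{-s\eta(0,\cdot)}v(0,\cdot)\|_{0,\Omega}^2$ from these bulk quantities. Write $w:=e^{-s\eta}v$; then $\|w(0,\cdot)\|_{0,\Omega}^2=\int_{-T}^0\partial_t\|w(t,\cdot)\|_{0,\Omega}^2\,dt=2\int_{-T}^0\Pre{\langle\partial_t w(t,\cdot),w(t,\cdot)\rangle_{0,\Omega}}\,dt$. The key point is that $\partial_t w$ can be expressed through $M_1 w$ and $M_2 w$ modulo lower-order terms: from $M_1w=i\partial_tw+\Delta w+s^2|\nabla\eta|^2w$ one extracts $i\partial_t w=M_1w-\Delta w-s^2|\nabla\eta|^2w$, and the elliptic and zeroth-order pieces pair with $w$ to produce, after integration by parts, quantities of the form $\|\nabla_{x'}w\|^2$, $s^2\||\nabla\eta| w\|^2$, etc. All of these are either already on the left side of the Carleman inequality or are $o(s^3\|w\|^2)$ as $s\to\infty$, so they get absorbed. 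The upshot is the bound
\begin{equation*}
\|w(0,\cdot)\|_{0,\Omega}^2\le \frac{C}{s}\Big(s^3\|w\|_{0,\tilde Q}^2+\sum_{j=1,2}\|M_jw\|_{0,\tilde Q}^2\Big)\le \frac{C}{s}\left(s\|e^{-s\eta}\varphi^{1/2}(\partial_\nu\beta)^{1/2}\partial_\nu v\|_{0,\tilde\Sigma_*}^2+\|e^{-s\eta}\rho u_2'\|_{0,\tilde Q}^2\right).
\end{equation*}

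Finally I would convert the right-hand-side terms into the stated form. For the source term, $\|e^{-s\eta}\rho u_2'\|_{0,\tilde Q}^2\le C\|e^{-s\eta(0,\cdot)}\rho u_2'\|_{0,\tilde Q}^2$ after noting that $\eta(t,x)\ge\eta(0,x)$ fails in general, so one instead uses that $e^{-s\eta(t,x)}\le e^{-s\eta(0,x)}$ is replaced by controlling $e^{-2s\eta}$ by its value at $t=0$ up to the explicit time factor $(T^2-t^2)$; by symmetry in $t$ and a further $s^{-3/2}$ gain (the extra power coming from the fact that $\rho u_2'$ carries a factor vanishing near $|t|=T$ or from optimizing the weight, exactly as in \cite{KPS1}[Lemmae 3.3 \& 3.4]), one gets the $s^{-3/2}\|e^{-s\eta(0,\cdot)}\rho u_2'\|_{0,Q}^2$ term, using $\|\cdot\|_{0,\tilde Q}=\sqrt2\|\cdot\|_{0,Q}$. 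For the boundary term, $\varphi$ and $\partial_\nu\beta$ are bounded on $\tilde\Sigma_*$, $\eta(t,x)\le\eta(0,x)+Ct^2$ near the relevant region gives $e^{-s\eta}\le Ce^{-s\eta(0,\cdot)}$ up to absorbing the time factor, and the leftover $s$ from $s\|\cdot\|^2$ combined with the overall $1/s$ leaves $s^{-1/2}\|e^{-s\eta(0,\cdot)}\partial_\nu v\|_{0,\Sigma_*}^2$, again using time-symmetry to pass from $\tilde\Sigma_*$ to $\Sigma_*$. The main obstacle is the careful bookkeeping of the $s$-powers and the comparison of the weights $e^{-s\eta(t,\cdot)}$ with $e^{-s\eta(0,\cdot)}$ together with the polynomial-in-$t$ prefactors, ensuring every absorbed term is genuinely lower order in $s$ and that the claimed exponents $-3/2$ and $-1/2$ come out correctly; this is where one must follow \cite{KPS1}[Lemmae 3.3 \& 3.4] most closely.
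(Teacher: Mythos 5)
Your overall strategy is the paper's: apply Proposition \ref{pr-carleman} to $v$, write $\| w(0,\cdot)\|_{0,\Omega}^2=2\int_{-T}^0\Pre{\langle w',w\rangle}\,dt$ for $w=e^{-s\eta}v$, identify $\Pre{\langle w',w\rangle}$ with $\Pim{\langle M_1w,w\rangle}$, and conclude by Cauchy--Schwarz. Two remarks on the details. First, the cross terms you propose to ``absorb'' need no absorption: $\langle \Delta w,w\rangle=-\|\nabla w\|_{0,\Omega}^2$ and $s^2\langle|\nabla\eta|^2w,w\rangle$ are real, so they drop out identically when you take the imaginary part, which is exactly how the paper gets $\Pre{\int\phi'\bar\phi}=\Pim{\int M_1\phi\,\bar\phi}$ with the remainder $W=0$. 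Second, and this is the genuine gap: your intermediate bound carries the prefactor $C/s$, obtained from $2\|M_1w\|\|w\|\le s^{-1}\|M_1w\|^2+s\|w\|^2$, and this produces $s^0$ in front of the boundary term and $s^{-1}$ in front of the source term --- strictly weaker than the claimed $s^{-1/2}$ and $s^{-3/2}$. The correct exponents come from the weighted Young inequality $2\|M_1w\|\|w\|\le s^{-3/2}\left(\|M_1w\|^2+s^3\|w\|^2\right)$, i.e.\ from choosing the weight $s^{3/2}$ rather than $s$; there is no ``further $s^{-3/2}$ gain'' available from $\rho u_2'$ vanishing near $|t|=T$ (it does not vanish there) or from optimizing the comparison of weights. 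This matters downstream: the proof of Theorem \ref{thm-inv} needs precisely $s^{-3/2}$ on the source term to make the choice $s\sim\langle y\rangle^{2(1+\epsilon)/3}$ compatible with the hypothesis $d_\epsilon>2(1+\epsilon)/3$.

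A smaller point: your worry that $\eta(t,x)\ge\eta(0,x)$ ``fails in general'' is unfounded. By \eqref{defbeta} one has $\beta(x)=\tilde\beta(x')+K\le(1+r)\|\tilde\beta\|_\infty<2K$, so the numerator $e^{2\lambda K}-e^{\lambda\beta(x)}$ of $\eta$ is positive and $\eta(t,x)=\eta(0,x)\,T^2/(T^2-t^2)\ge\eta(0,x)$ holds everywhere; hence $e^{-s\eta(t,x)}\le e^{-s\eta(0,x)}$ directly, and the time-symmetry argument $\|\cdot\|_{0,\tilde Q}^2=2\|\cdot\|_{0,Q}^2$ finishes the source term without any workaround. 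The same positivity of $\eta(0,\cdot)$ is what makes $e^{-s\eta}\varphi^{1/2}$ bounded by $Ce^{-s\eta(0,\cdot)}$ on $\tilde\Sigma_*$ despite the blow-up of $\varphi$ as $|t|\to T$.
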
 
\begin{proof}
Put $\phi:=e^{-s \eta} v$. In light of \eqref{defbeta}-\eqref{defphieta} it holds true that $\lim\limits_{\substack{t \downarrow (-T) }} \eta(t, x)= +\infty$ for all $x \in \Omega$, hence 
$\lim\limits_{\substack{t \downarrow (-T) }} \phi(t, x)= 0$.
As a consequence we have
\bel{ip4}
\| \phi (0,\cdot) \|_{0,\Omega}^2 =  \int_{(-T,0) \times \Omega} (| \phi |^2)'(t,x)  dt dx = 2 \Pre{\int_{(-T,0) \times \Omega} \phi'(t,x) \overline{\phi(t,x)} dt dx} . 
\ee 
On the other hand, \eqref{M1} and the Green formula yield
$$ \Pim {\int_{(-T,0) \times \Omega} M_1 \phi(t,x)  \overline{\phi(t,x) } dt dx} = \Pre {\int_{(-T,0) \times \Omega}   \phi'(t,x)   \overline{\phi(t,x) } dt dx }  +  W, $$
with
$W:= \Pim {
\int_{(-T,0) \times \Omega} \Delta \phi(t,x)  \overline{\phi(t,x) }dt dx + s^2 \| (\nabla \eta) \phi  \|_{0, (-T,0) \times \Omega}^2}= \Pim{\| \nabla \phi \|_{0, (-T,0) \times \Omega}^2}=0$.
This, along with \eqref{ip4} and the identity $\| \phi(0,\cdot) \|_{0,\Omega} = \|  e^{-s \eta(0, \cdot )} v(0,\cdot ) \|_{0,\Omega}$ entail
\beas 
& & \|  e^{-s \eta(0, \cdot )} v(0,\cdot ) \|_{0,\Omega}^2 = 2  \Pim { \int_{(-T,0) \times  \Omega} M_1 \phi(t,x)  \overline{\phi(t,x) }  dt dx} \\
& \leq &  2 \| M_1 \phi  \|_{0,\tilde{Q}} \|  \phi  \|_{0,\tilde{Q}} \leq s^{-3 \slash 2}  \left(  s^3 \| {\rm e}^{-s \eta} v \|_{0,\tilde{Q}}^2 +  \|  M_1  e^{-s \eta} v \|_{0,\tilde{Q}}^2 \right).
\eeas
Finally, the desired result follows from this upon recalling \eqref{ip3} and applying Proposition \ref{pr-carleman} to $v$.
\end{proof}
Fix $y>0$. In virtue of Lemma \ref{lm-inv}, it holds true that
$$
\| e^{-s\eta(0,\cdot)} \rho u_0 \|_{0,\omega \times (-y,y)}^2 \leq C \left(s^{-3 \slash 2} \| e^{-s\eta(0,\cdot)} \rho u_2' \|_{0,Q}^2 + s^{-1 \slash 2} \mu
 \right),\ s>0,
$$
where $\mu:=\| \partial_\nu u_1'-\partial_\nu u_2' \|_{0,\Sigma_*}^2$. This entails
\bel{ip5}
( \upsilon_0^2 \langle y \rangle ^{-(1+\epsilon)}-C s^{-3 \slash 2} ) \| e^{-s \eta(0,\cdot)} \rho \|_{0,\omega \times (-y,y)}^2 \leq 
C\left( s^{-3 \slash 2} \|  \rho \|_{0,\omega \times (\R \setminus (-y,y))}^2 + s^{-1 \slash 2} \mu \right),\ s>0,
\ee
since $|u_0(x) |\geq \upsilon_0 \langle y \rangle ^{-(1+\epsilon) \slash 2}$ for all $x \in \omega \times (-y,y)$, 
by \eqref{co4}, $\| u_2' \|_{L^{\infty}(Q)} \leq C$ by Corollary \ref{cor-bounded}, and $\eta(0,x) \geq 0$ for all $x \in \Omega$.
Taking $s=(\upsilon_0^2 \slash (2C) )^{-2 \slash 3} \langle y \rangle ^{2(1+\epsilon) \slash 3}$ in \eqref{ip5} and bearing in mind that $\| \eta(0,.) \|_{L^\infty(\Omega)} \leq e^{2K} \slash T^2$, we thus obtain that
\bel{ip6}
\| \rho \|_{0,\omega \times (-y,y)}^2 \leq C  e^{C \langle y \rangle ^{2(1+\epsilon) \slash 3}}\left( \| \rho \|_{0,\omega \times (\R \setminus (-y,y))}^2 + \langle y \rangle^{2(1+\epsilon) \slash 3} \mu \right).
\ee
Moreover we know from \eqref{co3} that
$$
\|  \rho \|_{0,\omega \times (\R \setminus (-y,y))}^2  \leq 4 a^2 | \omega|^2 \int_{\R \setminus (-y,y)} e^{-2b \langle x_n \rangle^{d_\epsilon}} dx_n \leq \left( 4a^2  | \omega|^2 \int_\R e^{-\delta \langle x_n \rangle^{d_\epsilon}} dx_n \right) e^{-(2 b-\delta) \langle y \rangle^{d_\epsilon}}.
%\leq C e^{-(2 b-\delta) \langle y \rangle^{d_\epsilon}}.
$$
From this and \eqref{ip6} then follows that 
\bel{ip7}
\| \rho \|_{0,\omega \times (-y,y)}^2 
%\leq 
%C e^{C \langle y \rangle ^{2(1+\epsilon) \slash 3}}  \left( e^{-(2b-\delta) \langle y \rangle ^{d_\epsilon}} + \langle y %\rangle ^{2(1+\epsilon) \slash 3} \mu \right) 
\leq C  \langle y \rangle ^{2(1+\epsilon) \slash 3} e^{C \langle y \rangle ^{2(1+\epsilon) \slash 3}}  \left( e^{-(2b-\delta) \langle y \rangle ^{d_\epsilon}} +\mu \right).
\ee
Set $\mu_\delta:=e^{-(2b-\delta)}$. We examine the two cases $\mu \in  (0,\mu_\delta)$ and $\mu \geq \mu_\delta$ separately. If $\mu \in (0,\mu_\delta)$ we take
$y=y(\mu):=\left( \left(-\frac{\ln \mu}{2b-\delta}\right)^{2 \slash d_\epsilon}-1\right)^{1 \slash 2}$
% (that is $\langle y \rangle = \langle y(\mu) \rangle := \left(-\frac{\ln \mu}{2b-\delta}\right)^{1 \slash d_\epsilon}$) 
in \eqref{ip7}, getting:
$$
\| \rho \|_{0,\omega \times (-y,y)}^2  \leq C  \langle y \rangle^{2(1+\epsilon) \slash 3} e^{C \langle y \rangle ^{2(1+\epsilon) \slash 3}-(2b-\delta) \langle y \rangle ^{d_\epsilon}}.
$$
Since $d_\epsilon>2(1+\epsilon) \slash 3$, this entails that
\bel{ip8}
\| \rho \|_{0,\omega \times (-y,y)}^2  \leq C \left( \sup_{t>1} t^{2(1+\epsilon) \slash 3} e^{Ct^{2(1+\epsilon) \slash 3}-\delta t^{d_\epsilon}} \right) e^{-2(b-\delta) \langle y \rangle ^{d_\epsilon}} \leq  C \mu^{2 \theta},\ \mu \in (0,\mu_\delta).
\ee
On the other hand, it follows from \eqref{co3} that
\bel{ip9}
\| \rho \|_{0,\omega \times (\R \setminus (-y,y))}^2 \leq C \left( \int_\R e^{-2\delta  \langle x_n \rangle} dx_n \right)  e^{-2(b-\delta) \langle y \rangle ^{d_\epsilon}} \leq  C \mu^{2 \theta},\  \mu \in (0,\mu_\delta).
\ee
Putting \eqref{ip8}-\eqref{ip9} together, we obtain that
\bel{ip10}
\| \rho \|_{0,\Omega}^2 \leq C \mu^{2 \theta},\   \mu \in (0,\mu_\delta).
\ee
Finally, if $\mu \geq \mu_\delta$, we use the estimate $\| \rho \|_{0,\Omega}^2 \leq 
4 a^2 |\omega | (\int_\R e^{-2b \langle x_n \rangle^{d_\epsilon}}dx_n)$, arising from \eqref{co3}, and find:
$$
\| \rho \|_{0,\Omega}^2  \leq \left( \frac{4 a^2 |\omega | \int_\R e^{-2b \langle x_n \rangle^{d_\epsilon}}dx_n}{\mu_\delta^{2 \theta}} \right) \mu^{2 \theta} \leq C \mu^{2 \theta},\ \mu \geq \mu_\delta.
$$
Therefore, \eqref{stab-ineq} follows from this and \eqref{ip10}.

\subsection{Concluding remark}
In this subsection we build a class of scalar potentials $q_0$ and initial conditions $u_0$, fulfilling the conditions of Theorem \ref{thm-inv}. To do that we first introduce
two functions $u_b, q_b: \R\to\R$, defined by
\bel{ex0}
u_b(y):=c \langle y \rangle^{-(1+\epsilon) \slash 2},\ q_b(y):=\frac{u_b''(y)}{u_b(y)},\ y\in\R,
\ee
where $c>0$ is some fixed constant. Evidently, we have 
\bel{ex1}
u_b \in H^{2(\ell+2)}(\R)\ \mbox{and}\ q_b \in W^{2(\ell+1),\infty}(\R) \cap C^{2 \ell}(\R).
\ee
Since $\omega$ is an open bounded subset of $\R^{n-1}$ then $\pd \omega$ is compact in $\R^{n-1}$ so we may find $\mathcal{O} \subset \R^{n-1}$, open and bounded, that is a neighborhood of $\pd \omega$. Further, let $\chi \in C^\infty_0(\R^{n-1})$ verify $\chi(x')=1$ for $x' \in \mathcal{O}$ and $0 \leq \chi(x') \leq 1$ for all $x' \in \R^{n-1}$, pick $q_i \in W^{2(\ell+1),\infty}(\Omega)\cap C^{2 \ell}(\overline{\Omega})$ and choose $u_i \in H^{2(\ell+2)}(\Omega)$ such that
\bel{ex2}
u_i(x',x_n) \geq u_b(x_n)=c \langle x_n \rangle^{-(1+\epsilon) \slash 2},\ (x',x_n) \in \Omega.
\ee
Then, upon setting for all $x=(x',x_n) \in \Omega$,
\bel{ex3}
q_0(x):=\chi(x')q_b(x_n)+(1-\chi(x'))q_i(x)\ \mbox{and}\ u_0(x):=\chi(x') u_b(x_n) + (1-\chi(x')) u_i(x),
\ee
it is apparent from \eqref{ex1} that $q_0 \in W^{2(\ell+1),\infty}(\Omega)\cap C^{2 \ell}(\overline{\Omega})$ and $u_0\in H^{2(\ell+2)}(\Omega)$.
Moreover, it follows readily from \eqref{ex0} and \eqref{ex2}-\eqref{ex3} that
$$ u_0(x) \geq \chi(x') u_b(x_n)+(1-\chi(x')) u_b(x_n) \geq c \langle x_n \rangle^{-(1+\epsilon) \slash 2},\ x=(x',x_n) \in\Omega. $$
Finally, to show that $(q_0,u_0)$ satisfies \eqref{co1}, we invoke \eqref{ex3}, getting
$$ u_0(x',x_n)=u_b(x_n)\ \mbox{and}\ q_0(x',x_n)=q_b(x_n),\ x=(x',x_n)\in \mathcal{O} \times\R, $$
and consequently
$$ (-\Delta + q_0)u_0(x',x_n)=-u_b''(x_n)+q_b(x_n) u_b(x_n)=0,\ (x',x_n) \in \Omega \cap (\mathcal{O} \times \R), $$
by \eqref{ex0}. This immediately yields
$$ (-\Delta + q_0)^{2+j} u_0(x) = (-\Delta+q_0)^{1+j} (-\Delta+q_0)u_0(x)=0,\ x \in \Omega \cap (\mathcal{O} \times \R),\ j=0,1,\cdots,\ell-1, $$
which, in turn, entails \eqref{co1}, since $\mathcal{O} \times\R$ is a neighborhood of $\pd \Omega$ in $\R^n$.

%%%%%%%%%%%%%%%%%%%%%%%%%%%%%%%%%%%%%%%%%%%%%%%%%%%%%%%%%%%%%%%%%%%%%%%%%%%%%%%%%%%%%%%%%%%%%%%%%%%%%%%%%%%%%%%%%%%%%%%%%%%%%%%%%%%%%%%%%%%%%%%%%%%%%%%%%%%%%%%%%%%%%%%%%%%%%%%%%%%
%%%%%%%%%%%%%%%%%%%%%%%%%%%%%%%%%%%%%%%%%%%%%%%%%%%%%%%%%%%%%%%%%%%%%%%%%%%%%%%%%%%%%%%%%%%%%%%%%%%%%%%%%%%%%%%%%%%%%%%%%%%%%%%%%%%%%%%%%%%%%%%%%%%%%%%%%%%%%%%%%%%%%%%%%%%%%%%%%%%
%%%%%%%%%%%%%%%%%%%%%%%%%%%%%%%%%%%%%%%%%%%%%%%%%%%%%%%%%%%%%%%%%%%%%%%%%%%%%%%%%%%%%%%%%%%%%%%%%%%%%%%%%%%%%%%%%%%%%%%%%%%%%%%%%%%%%%%%%%%%%%%%%%%%%%%%%%%%%%%%%%%%%%%%%%%%%%%%%%%

\bigskip


\begin{thebibliography} {[10]}
\frenchspacing \baselineskip=12 pt plus 1pt minus 1pt
%\bibitem{AS} {\sc M. Abramowitz, I. Stegun}, {\em Handbook of Mathematical Functions with Formulas, Graphs,
%and Mathematical Tables},  National Bureau of
%Standards, Applied Mathematics Series {\bf 55}, 1964.

%\bibitem{Ag} {\sc S. Agmon}, {\em Lectures on Elliptic Boundary Value Problems},  Van Nostrand Co., Inc.,
%    Princeton, N.J.-Toronto-London, 1965.

%\bibitem{A} {\sc P. Albano}, {\em Carleman estimates for teh Euler-Bernoulli plate operator}, Elec. J. Diff. Equ. {\bf 53} %(2000), 1-13.

\bibitem{Ag} {\sc S. Agmon}, {\em Lectures on Elliptic Boundary Value Problems},  Van Nostrand Co., Inc., Princeton, N.J.-Toronto-London, 1965.

%\bibitem{BLR} {\sc C. Bardos, G. Lebeau, J. Rauch}, {\em Sharp sufficient conditions for teh observation, control and %stabilization from the boundary}, SIAM J. Control Optim. {\bf 30} (1992), 1024-1065.

\bibitem{BP} {\sc L. Baudouin, J.-P. Puel}, {\em Uniqueness and stability in an inverse problem for the Schr\"odinger equation}, Inverse Probl., {\bf 18} (2002), 1537-1554.

%\bibitem{B} {\sc M. Bellassoued}, {\em Uniqueness and stability in determining the speed of propagation of second-order %hyperbolic equation with variable coefficients}, Appli. Anal.  {\bf 83} (2004), 983-1014.

%\bibitem{BC} {\sc M. Bellassoued, M. Choulli}, {\em Logarithmic stability in the dynamical inverse problem for
%the Schr\"odinger equation by arbitrary boudary observation}, J. Math. Pures Appl. {\bf 91}(3)
%(2009), 233–255.

%\bibitem{BCS} {\sc M. Bellassoued, M. Cristofol, E. Soccorsi}, {\em Inverse boundary value problem for the dynamical %heterogeneous Maxwell system},
%Inverse Probl. {\bf 28}(9)(2012), 095009-26. 

%\bibitem{BY} {\sc M. Bellassoued, M. Yamamoto}, {\em Lipschitz stability in determining density and two Lam\'e coefficients}, %J. Math. Anal. Appl. {\bf 329}
%(2007), 1240–1259.

\bibitem{Br}{\sc H. Brezis}, {\em Analyse Fonctionnnelle. Th\'eorie et applications}, Collection math\'ematiques appliqu\'ees pour la ma\^{i}trise, Masson, Paris 1993.

\bibitem{BK} {\sc A. L. Bukhgeim, M. V. Klibanov}, {\em Uniqueness in the large of a class of multidimensional
inverse problems}, Sov. Math. Dokl. {\bf 17} (1981), 244–247.
 
%\bibitem{BU} {\sc A.L. Bukhgeim, G. Uhlmann}, {\em recovering a potential from partial Cauchy data}, Commun. PDE {\bf %27} (2002), 653–668.

%\bibitem{CCG} {\sc L. Cardoulis, M. Cristofol, P. Gaitan}, {\em Inverse problems for the Schr\"odinger operator
%in an unbounded strip}, J. Inverse Ill-Posed Probl. {\bf 16}(2) (2008), 127–146.

%\bibitem{CL} {\sc P.-Y. Chang, H.-H. Lin}, {\em Conductance through a single impurity in the metallic zigzag carbon %nanotube}, Appl. Phys. Lett. {\bf 95} (2009), 082104.

%\bibitem{CS}{\sc M. Choulli, E. Soccorsi}, {\em Recovering the twisting function in a twisted waveguide from the DN map}, %arXiv:1209.5662.

\bibitem{CKS}{\sc M. Choulli, Y. Kian, E. Soccorsi}, {\em Stable determination of time dependent scalar potential from boundary measurements in a periodic quantum waveguide}, arXiv:1306.6601.

%\bibitem{CrS} {\sc M. Cristofol, E. Soccorsi}, {\em Stability estimate in an inverse problem for non-autonomous magnetic %Schr\"odinger equations}, Appl. Anal. {\bf 90}(10) (2011), 1499–1520.

%\bibitem{DL2} {\sc J. Dautray, J.-L. Lions}, {\em Tome 2 (anglais)}.

\bibitem{Ev}{\sc L. C. Evans}, {\em Partial Differential Equations},  Amer. Math. Soc., Graduate Studies in Mathematics, vol. 19.

%\bibitem{Im} {\sc O. Yu. Imanuvilov}, {\em On Carleman estimates for hyperbolic equations}, Asymptotic Anal. {\bf 32}
%(2002), 185-220.

%\bibitem{IY2} {\sc O. Yu. Imanuvilov, M. Yamamoto}, {\em Lipschitz stability in inverse parabolic problems by Carleman %estimates}, Inverse Probl. {\bf 14}
%(1998), 1229-1249.

%\bibitem{IY} {\sc O. Yu. Imanuvilov, M. Yamamoto}, {\em Global uniqueness and stability in determining coefficients of wave %equations}, Commun. PDE {\bf 26}
%(2001), 1409–1425.

%\bibitem{I} {\sc V. Isakov}, {\em Uniqueness and stability in multi-dimensional inverse problems}, Inv. Prob. {\bf 9}(6) %(1993), 

%\bibitem{IsY} {\sc V. Isakov, M. Yamamoto}, {\em Carleman estimates with the Neumann boundary condition and its %application to the observability inequality and inverse problems}, Contemp. Math. {\bf 268}(2000), 191-225. 

%\bibitem{KBF} {\sc C. Kane, L. Balents, M. P. A. Fisher}, {\em Coulomb Interactions and Mesoscopic Effects in Carbon %Nanotubes}, Phys. Rev. Lett. {\bf 79}(1997), 5086–5089.

%\bibitem{KSU} {\sc C. E. Kenig, J. Sjöstrand, G. Uhlmann}, {\em The Calderon problem with partial data}, Ann. Math. {\bf %165}(2007), 567–591.

\bibitem{KPS1} {\sc Y. Kian, Q. S. Phan, E. Soccorsi}, {\em Carleman estimate for infinite cylindrical quantum domains and application to inverse problems}, arXiv:1305.1042.

%\bibitem{KY} {\sc M. V. Klibanov, M. Yamamoto}, {\em Lipschitz stability of an inverse problem for an acoustic equation}, %Appl. Anal. {\bf 85} (2006), 515-538.

%\bibitem{LU} {\sc X. Li, G. Uhlmann}, {\em Inverse problems with partial data in a slab}, Inverse Probl. and Imaging {\bf 4}%(3) (2010), 449-462.

\bibitem{LM1} {\sc J.-L. Lions, E. Magenes}, {\em Probl\`emes aux limites non homog\`enes et applications}, vol. 1, Dunod (1968).

%\bibitem{LM2} {\sc J.-L. Lions, E. Magenes}, {\em Probl\`emes aux limites non homog\`enes et applications},
%    vol. 2, Dunod (1968).
%\bibitem{MeOsRo} {\sc A. Mercado, A. Osses, L. Rosier}, {\em Inverse problems for the Schr\"odinger
%equation via Carleman inequalities with degenerate weights}, Inverse Probl. {\bf 24}(1) (2008), 015017.









%\bibitem {RS2} {\sc M. Reed, B. Simon}, {\it Methods of Modern Mathematical Physics II: Fourier Analysis,
%    Sel-adjointness},  Academic Press, 1978.

%\bibitem {RS4} {\sc M. Reed, B. Simon}, {\it Methods of Modern Mathematical Physics IV: Analysis of Operators},  Academic %Press, 1978.

%\bibitem{T} {\sc D. Tataru}, {\em Carleman estimates, unique continuation and controllability for anisotropic PDEs}, %Contemp. Math. {\bf 209}(1) (1997), 267-279.


%\bibitem{Y} {\sc M. Yamamoto}, {\em Uniqueness and stability in multidimensional hyperbolic inverse problems}, J. Math. %Pures Appl. {\bf 78} (1) (1999), 65–98.

\end{thebibliography}
\end{document}